\documentclass[12pt]{amsart}

\usepackage{amsthm,amsmath,epsf,amssymb,xy}

\usepackage{graphicx}

\xyoption{all}

\def\dual                 {{\vee}} 
\def\rk                 {{\rm rk}}

\def\SSigma         {{\bf \Sigma}}

\def\ZZ                 {{\mathbb Z}} 
\def\PP                {{\mathbb P}} 
\def\RR                 {{\mathbb R}} 
\def\CC                 {{\mathbb C}} 
\def\QQ                 {{\mathbb Q}}

\def\cA    {{\mathcal A}}

\def\Ca    {{\mathcal C}} 
 
\def\Fa    {{\mathcal F}} 
\def\Ia    {{\mathcal I}}

\def\Oa    {{\mathcal O}} 
\def\Pa    {{\mathcal P}}

\def\Ta    {{\mathcal T}}

 
\newtheorem{lemma}{Lemma}[section] 
\newtheorem{theorem}[lemma]{Theorem} 
\newtheorem{corollary}[lemma]{Corollary} 
\newtheorem{proposition}[lemma]{Proposition} 
\newtheorem{definition}[lemma]{Definition} 
\newtheorem{conjecture}[lemma]{Conjecture} 
 
\theoremstyle{remark} 
\newtheorem{remark}[lemma]{Remark} 
\newtheorem*{proof*}{Proof}

\title[Discriminants and toric $K$--theory]{
Discriminants and toric $K$--theory}
 
\author{R. Paul Horja and Ludmil Katzarkov} 
\address{Department of Mathematics, University of Miami, Coral Gables, FL 33146, USA;
{\tt horja@math.miami.edu}} 
\address{Department of Mathematics, University of Miami, Coral Gables, FL 33146, USA;
NRU Higher School of Economics, Moscow, Russia; 
Institute of Mathematics and Informatics, Bulgarian Academy of Sciences, 
Acad. G. Bonchev Str. bl. 8, 1113, Sofia, Bulgaria;
\newline{\tt l.katzarkov@miami.edu}} 

\thanks{}

\begin{document} 

\begin{abstract} 
We discuss a categorical approach
to the theory of discriminants in the
combinatorial language 
introduced by Gelfand, Kapranov and 
Zelevinsky. Our point of view is 
inspired by homological mirror symmetry and 
provides $K$--theoretic evidence for  
a conjecture presented by Paul 
Aspinwall in a conference talk in Banff in March 2016
and later in a joint paper with Plesser and Wang. 
\end{abstract}

\maketitle

\medskip

\section{Introduction.}\label{sec0}
In this note, we investigate a conjecture
of Aspinwall, Plesser
and Wang \cite{APW}.
Our calculations offer supporting evidence at the 
level of toric $K$--theory. We enhance the
conjecture and propose a novel point of view on the multiplicities of 
discriminants which play an important implicit role in understanding 
the categorical aspects of the underlying toric birational geometry
and the related web of spherical functors. 

\

In the projective Calabi--Yau case, homological mirror symmetry \cite{kont1} is stated 
as an equivalence of the bounded
derived category 
of coherent sheaves $D^b(X)$ on a smooth projective Calabi--Yau variety $X$ and
the derived Fukaya category $DFuk(Y)$ of the mirror Calabi--Yau variety $Y.$ 
Mirror symmetry and string theory considerations about $D$--brane moduli spaces
also predict an identification between
the global structure of locally trivial families of categories $D^b(X)$ and $DFuk(Y)$
over the K\"ahler parameters of $X$ and
the complex parameters of $Y,$ respectively. 
As a by--product of his general conjecture, Kontsevich \cite{kont2} 
conjectured that the action on cohomology of the group of 
self--equivalences of $D^b(X)$ 
matches the monodromy action on the cohomology
of the mirror Calabi--Yau variety $Y.$  In particular, 
for a smooth projective Calabi-Yau variety $X,$ 
the action of of the spherical twist induced by the structure
sheaf of $X$ is ''mirrored" by the monodromy action 
around a distinguished ''primary" component in the 
moduli space of complex structures of $Y.$ While such a 
statement is implicit in the existing proofs of homological
mirror symmetry as well as in the statement of the Strominger--Yau--Zaslow 
conjecture \cite{SYZ} as identifications at large complex/radius limits,
our work is concerned with a similar global identification 
``far" away from such special points in the moduli spaces. 

\

In the present work, we consider 
the simplified case of a 
toric quasi-projective Calabi-Yau
Deligne-Mumford stack viewed as 
a resolution of an affine toric Gorenstein
singularity. This geometry is 
determined 
by a finite set of vectors $A \subset N=\ZZ^d$ contained
in an integral hyperplane at distance $1$ from the origin.  
The proposal by Aspinwall, Plesser and Wang expands
Kontsevich's
identification to all the irreducible components of the principal 
$A$-determinant $E_A.$ 
The polynomial $E_A$ 
has integer coefficients and 
generalizes the classical discriminant. It was introduced and studied by 
Gelfand, Kapranov and Zelevinsky \cite{GKZbook} and, among other 
remarkable features, has the
property that its Newton polytope coincides with the secondary polytope $S(A)$ 
which is combinatorially determined by the
starting configuration $A.$ One can view the 
main result of our work as a first step towards the categorification of 
the principal $A$-determinant $E_A.$ 

\

The APW conjecture predicts that, for each 
non-empty face $\Gamma$ 
of the polytope $Q= {\rm conv} (A),$ there exists 
a spherical functor $D_\Gamma \to D^b(X)$ corresponding 
to the component of the $A$-discriminant determined by $\Gamma.$ 
For any stacky fan $\SSigma$ supported 
on the cone over the polytope $Q=\rm{conv} (A),$ let 
$X=X_\SSigma$ denote the associated 
toric Deligne-Mumford stack as defined
by Borisov, Chen and Smith \cite{BCS}. 
In Section \ref{sec:k},
we give a 
conjectural construction of the category $D_\Gamma$ 
as the bounded derived category 
of coherent sheaves $D^b(X_{\SSigma_\Gamma})$ 
of the toric DM stack $X_{\SSigma_\Gamma}$. 
The stacky fan $\SSigma_\Gamma$ is determined by 
$\SSigma$ and $\Gamma$ (cf. Definition \ref{def:fangamma}) 
and the triangulated 
category $D_\Gamma$ is independent of the stacky fan 
$\SSigma,$ but its $t$--structure and its image in $D^b(X)$
may change under a toric crepant birational transformation 
induced by a generalized flop of the stacky fan $\SSigma.$
Intuitively, this change corresponds to some interesting
wall--crossing phenomena in the K\"ahler parameter moduli space
along the 
component of the $A$-discriminant determined by $\Gamma.$ 
Moreover, for any edge $F$ of the secondary polytope, there 
exists a toric Deligne--Mumford stack $Z_F$ 
and a ``wall--monodromy" spherical functor 
$D^b (Z_F) \to D^b (X)$ 
(cf. Definition \ref{def:fanz} and 
Proposition \ref{prop:ez}).
We state the main 
conjecture as follows. 

\medskip

{\bf  Conjecture \ref{conj:apw}.
}

{\it 
$1)$ For any edge $F$ of the secondary polytope, the category $D^b(Z_F)$
admits a semiorthogonal decompositon consisting of 
$n_{\Gamma,F}$ components $D^b (D_\Gamma)$ for each non-empty
face $\Gamma$
of the polytope $Q={\rm conv}(A),$ for some explicitily defined algebraic multiplicities 
$n_{\Gamma,F}$ (cf. Definition \ref{def:n}).

$2)$ For any non-empty face $\Gamma$ of the polytope $Q,$
there exists a spherical functor
$D^b(D_\Gamma) \to D^b (X)$ for any toric DM stack $X$ 
determined by a triangulation corresponding to a vertex of the secondary 
polytope. }

\

When $n_{\Gamma,F} > 0,$
the second part is a direct consequence of the first:
the wall monodromy functors $D^b (Z_F) \to D^b (X)$ 
are spherical, so an unpublished result of Kuznetsov,
and Halpern-Leistner--Shipman \cite{HLS}
implies that each component 
of the semiorthogonal decomposition determines 
a spherical functor. 

\

The main result of this work is the following theorem. 

\

{\bf Theorem  \ref{thmm}.
}
{\it 
For any edge $F$ of the secondary polytope, 
the following equality holds:
\begin{equation*}
\rk (K_0 (D^b(Z_F))) = \sum_{\Gamma \subset Q} n_{\Gamma,F} \cdot \rk (
K_0(D_\Gamma
)),
\end{equation*}
with the summation taken over all the non-empty 
faces $\Gamma$ of the polytope $Q.$ 
}

\

The theorem 
lends support to the conjecture by checking its consistency 
in terms of the ranks of the $K$-theory for the various toric Deligne-Mumford
stacks that enter the geometrical picture and local multiplicities of discriminants
and their intersecting components.

\

The importance of these multiplicities has been recognized in the 
work of Aspinwall, Plesser and Wang \cite{APW}, and their 
relevance is clearly explained 
in the more recent paper of Kite and Segal \cite{KS}. Such multiplicities are also
featured in earlier works \cite{AHK}, \cite{HLS}.
In the purely categorical sense, they are implicit in the theory of ''windows"
that began with the string theoretical work of Herbst, Hori and Page \cite{HHP},
and was made rigorous by Ballard, Favero, Katzarkov \cite{BFK} and Halpern-Leistner \cite{HL}. 
Our approach to determining multiplicities can be computationally involved but 
it is elementary algebraic 
(see Definition \ref{def:n}) and it uses the powerful GKZ toolbox 
for studying discriminants. Our definition avoids the well known subtleties related to
the local topological picture of intersecting discriminants. A
categorification of 
the classical braid factorization technique should offer 
a truly conceptual picture of the expected semiorthogonal decompositions
beyond the $K$--theoretic point of view. For a glimpse of how such a procedure
might work, see \cite{AHK}. 

\

In this note, we do not perform any mirror monodromy calculations
for the associated GKZ $D$--module. A true homological
mirror symmetry consistency check of the APW conjecture would require such computations,
but we leave this discussion for future work. However, 
by simply combining the results of this work with the 
analytic
monodromy calculations from \cite{hor}, \cite{BH2},
we can check the APW 
predictions in some cases under certain transversality assumptions, 
see Corollary \ref{cor:tr}. 
It is important to note that the principal $A$-determinant 
is determined by the characteristic cycle of the GKZ $D$--module.
We expect that the ''stacky" nature of the toric geometrical context
will require the use of the better behaved GKZ system 
\cite{BH} and an appropriately adapted version of the topological mirror map. 

\

{\it Acknowledgements.} We would like to thank Lev Borisov for answering some
questions about toric DM stacks. L. K. was partially supported by 
the Simons Investigators Award HMS,
Simons Collaborations Grant, NSF Grant,
National Science Fund of Bulgaria, National
Scientific Program-- Excellent Research and People for the Development
of European Science (VIHREN), Project No. KP-06-DV-7. The study has been funded within the 
framework of the HSE University Basic Research Program.

\

\section{Review of toric geometry and $A$-discriminants}\label{sec1}

\subsection{Toric Geometry}
We briefly review the definition of a  
toric Deligne-Mumford stack as developed in \cite{BCS}
(see also \cite{Jiang}). A \emph{stacky fan} ${\bf \SSigma}$ is defined as
the data $(\Sigma,\{v_i\})$ 
where $\Sigma$ is a simplicial fan in $\bar{N}= N \otimes_{\ZZ} \QQ,$ 
and $\{v_i\}$ $1\leq i \leq n,$
is a collection of elements in the 
finitely generated group $N.$ We assume that the rays of the 
simplicial fan $\Sigma$ are generated by 
the possibly non-minimal non-zero integral elements $\bar{v}_i.$

The set $\{v_i\}$ defines a map 
$
\alpha:\ZZ^n\to N
$
with finite cokernel. If we dualize, we get
an exact sequence
$$
0 \to N^\dual \to (\ZZ^n)^\dual \to N^{\prime} \to K \to 0.
$$
defining the Gale dual of $\alpha$ (see \cite{BCS}),
with $K$ a finite abelian group.
We apply the functor ${\rm Hom} (\cdot, \CC^{\times})$
to get the exact sequence
$$
0 \to K \to G \to (\CC^{\times})^n \to (\CC^{\times})^d \to 1,
$$
where $G$ is the algebraic group 
$
{\rm Hom}(N^{\prime},\CC^{\times}).
$

\medskip

Consider the subset $Z$ of $\CC^n$ that consists of all the
points ${\bf z}=(z_1,\ldots,z_n)$ such that the set of $v_i$ for
the zero coordinates of ${\bf z}$ is contained in a cone of $\Sigma$.
Then the toric DM stack $X_{\bf\SSigma}$
that corresponds to the stacky fan
${\SSigma}$ is defined as the stack quotient 
$[Z/G]$ where $Z$ and $G$ are 
endowed with the natural reduced 
scheme structures. 
It has been shown in \cite{BCS} that $X_{\SSigma}$
is a Deligne-Mumford stack whose moduli space is
the simplicial toric variety $X_\Sigma.$
The category of coherent sheaves on $[Z/G]$ 
is equivalent to that of $G$-linearized coherent sheaves on
$Z$, see \cite[Example 7.21]{Vistoli}. 

\medskip

To a cone $\sigma$ in $\Sigma,$ one can associate a closed substack 
$X_{\sigma}$ of 
$X_{\SSigma}$ by looking at the quotient of $N$ by the subgroup
$N_{\sigma}$ spanned by $v_i$ with $\bar{v}_i \in \sigma$. The cones of the 
new fan $\Sigma/\sigma$ are obtained as images in $N/N_{\sigma} \otimes \RR$
of the cones in the star
of $\sigma$ in $\Sigma$, consisting of those cones in $\Sigma$
containing $\sigma$ as a subcone. 
The elements $\bar{v}_i$ for the fan $\Sigma/\sigma$ 
are images of those $v_i$ that belong to cones in the star of $\sigma$
but are not in $\sigma.$
The quotient $N/N_{\sigma}$ may 
have torsion, even in the case of a torsionfree $N.$ Therefore,
the closed substack $X_{\sigma}$ may not be reduced even though
$X_{\SSigma}$ is.

\medskip

\subsection{Combinatorics of $A$-sets}\label{section:A}
In this section, we review some concepts discussed in chapter 7 of the
book \cite{GKZbook}.
A finite subset $A= \{ v_1, \ldots, v_n \}$ of 
the integral lattice $N=\ZZ^d$ is called an {\bf $A$--set} if
the set $A$ generates the lattice $N$ as an abelian group, and 
there exists a linear function 
$h : N \to \ZZ$ such that $h(v_i)=1,$ for all 
$i, 1 \leq i \leq n.$

For the rest of the paper, we will assume that the set 
$$A= \{ v_1, \ldots, v_n \} \subset N=\ZZ^d$$ 
is an $A$--set. However, see Remark \ref{rm1}
below. 

We will use the notations 
\begin{equation*}
\begin{split}
Q&:= {\rm conv} (A) \subset \RR^{d-1},\\
K&:= \sum_{1 \leq i \leq n} \RR_{\geq 0} v_i= \RR_{\geq 0} Q \subset N \otimes \RR=\RR^{d},\\
S&:= \sum_{1 \leq i \leq n} \ZZ_{\geq 0} v_i= \ZZ_{\geq 0} A\subset N=\ZZ^{d}.
\end{split}
\end{equation*}
The rational polyhedral cone $K$ 
defines an affine toric variety
$Y={\rm Spec}\,\CC[K^\dual\cap N^\dual]$. The hyperplane condition on the 
set $A$ implies that 
this variety has Gorenstein singularities. 
Moreover, any regular (i.e. projective) triangulation 
of the polytope $Q$ with vertices among the elements of $A$ induces 
a stacky fan structure $\SSigma$ supported on the cone $K.$ 
The notion of a regular triangulation will be explained below.
There is a natural crepant birational morphism 
$
\pi:X_\SSigma\to Y
$
from the induced smooth toric Deligne-Mumford stack $X_\SSigma$
to the toric affine Gorenstein singularity $Y.$ 

\medskip

A {\em triangulation} of the set $A= \{v_1, \ldots, v_n \}$ is a triangulation of the 
polytope $Q$ such that all
vertices are among the the points $v_1, \ldots, v_n.$ 
A marked polytope $(P,B)$ consists of the convex polytope $P$
and a finite subset $B$ of $P$ such that $P=\hbox{Conv}(B).$

\begin{definition}
\cite[Definition 7.2.1]{GKZbook}
\label{def:polysub}
Given the set $A \subset N = \ZZ^d,$
a {\it polyhedral subdivision} $\Pa$ of $(Q,A)$ is a family 
of $(d-1)$-dimensional marked polytopes  
$(Q_i,A_i), i=1, \ldots, l,$ $A_i \subset A,$ such that
any intersection $Q_i \cap Q_j$ is a face (possible empty) of both 
$Q_i$ and $Q_j,$ 
$$
A_i \cap (Q_i \cap Q_j) = 
A_j \cap (Q_i \cap Q_j),
$$
and the union of all $Q_i$ coincides with $Q.$
\end{definition}

A function $f : Q \to \RR$ is said to be 
{\it $\Pa$--linear} for the polyhedral subdivision 
$\Pa=(Q_i,A_i)$ if $f$ is continuous and the 
restriction of $f$ to each $Q_i$ is affine-linear. 
A function $\psi : A \to \RR$ is said to be
{\it $\Pa$--linearizable}
if there exists a {\it $\Pa$--affine}
function $g_\psi : Q \to \RR$ such that 
$g_\psi (v)=\psi(v)$ for any $v \in \cup A_i.$  
A continuous function $g : Q \to \RR$ is 
{\it concave}, if for any $x,y \in Q,$ 
we have $g(tx + (1-t) y) \geq tg(x) + (1-t) g(y),$ $0 \leq t \leq 1.$ 
For any polyhedral subdivision 
$\Pa=(Q_i,A_i),$ let $C(\Pa)$ denote the cone 
of $\Pa$--linearizable functions 
$\psi : A \to \RR$ such that the associated 
function $g_\psi : Q \to \RR$ is concave 
and $g_\psi (v) \geq \psi(v)$ for all 
$v \in A \setminus \cup A_i.$ 
We say that the subdivision $\Pa$ is {\it regular}
if the cone $C(\Pa)$ has relative 
non-empty interior inside the linear 
space of $\Pa$--linearizable functions. 

\medskip

For two polyhedral subdivisions $\Pa=(Q_i,A_i)$ and $\Pa'=(Q'_j,A'_j),$ we 
shall say that $\Pa$ is a {\it refinement} $\Pa'$ if, for each $j,$ the collection
of $(Q_i,A_i)$ such that $ Q_i \subset Q'_j,$ forms
a polyhedral subdivision of $(Q'_j,A'_j).$ The set of polyhedral subdivisions is then 
naturally a partially ordered set (poset) with respect to refinement. 
Triangulations are minimal elements of this poset. The maximal element 
is the polyhedral subdivision $(Q=\hbox{conv}(A),A).$  

\medskip

The cones $C(\Ta)$ for all the regular triangulations of $(Q,A)$ together
with all the faces of these cones form a complete fan $F(A)$
in $\RR^A= \RR^n$ 
called the {\it secondary fan}. We choose a translation invariant 
volume form vol in $\RR^{n-1}$ (or in $\RR^n$) such that elementary simplex
in the lattice $N = \ZZ^n \subset \RR^n$ has volume $1.$ The characteristic
function of the triangulation $\Ta$ is the function $\phi_\Ta : A \to \RR$
defined by
$$
\phi_\Ta (v):=\sum_{v \in {\rm Vert}(\sigma)} {\rm vol} (\sigma),$$
where the summation is taken over all the maximal simplices of the triangulations 
$\Ta$ for which $v$ is vertex. We set $\phi (v)=0$ if $v$ is not a vertex
of simplex in $\Ta.$ The {\it secondary polytope} $S(A)$ is the convex
hull in $\RR^A=\RR^n$ of the functions $\phi_\Ta$ for all the triangulations
$\Ta$ of $(Q,A).$ 
Theorem 7.1.7 in \cite{GKZbook} shows that 
the secondary polytope is $n-d$ dimensional and its vertices 
are the characteristics functions $\phi_\Ta$ for all triangulations
$\Ta$ of $(Q,A).$ Moreover, the secondary fan $F(A)$ is the normal 
fan of the secondary polytope $S(A).$ We identify $\RR^A= \RR^n$ 
with its dual in the canonical way. 

\medskip

Consider two regular triangulations
such that the corresponding vertices in the secondary polytope 
are joined by an edge. 
The two triangulations differ by a 
{\it modification} along a circuit. 
A {\it circuit} in $A$ is a minimal dependent subset $ \{ v_i, i \in I\}$
with $I \subset \{1,\ldots, n \}.$ 
In particular any circuit determines an integral relation 
of the form
\begin{equation}\label{circuit}
\sum_{i \in I_+ } l_i v_i + \sum_{i \in I_-} l_i v_i =0,
\end{equation}
with $I= I_+ \cup I_-,$ 
where the two subsets $I_+:= \{ i: l_i > 0 \}$ and  $I_-:= \{ i: l_i < 0 \}$
are uniquely defined by the circuit up to replacing $I_+$ by $I_-$. 
We will assume that the relation (\ref{circuit}) is {\it primitive},
i.e. that the integers $l_i$ have no common prime factor. Given a 
circuit, 
one can write a (possibly) non-primitve relation 
by choosing
\begin{equation*}
|l_i|:= {\rm vol} ({\rm conv} (v_i, i \in I \setminus {i})). 
\end{equation*}
When there is no danger of confusion, we may call the index subset $I$ a circuit. 
The minimizing condition in the definition of a circuit implies that the 
sets conv($v_i, i \in I_+$) and conv($v_i, i \in I_-$) intersect in their common interior point. 
The polytope determined by $v_i,$ $i \in I,$
admits exactly two triangulations $\Ta_+(I)$
and $\Ta_-(I)$ defined by the simplices conv($\{v_j, j \in I \setminus i$), for 
$i \in I_+$, respectively $i \in I_-$.

\medskip

Suppose that the regular triangulations $\Ta$ and $\Ta^\prime$ 
of $A$ are obtained from each other by a modification 
along a circuit $I.$ We say that a 
subset $J \subset A \setminus I,$ is {\it separating}
for $\Ta$ and $\Ta^\prime$ if, for some $i \in I,$ the set
of $v_i,$ with $i \in (I \setminus i) \cup J$ is the set of vertices of a 
simplex (of maximal dimension) of $\Ta_1.$ It turns out 
that a separating set $J$ has the property that the sets
$(I \setminus i) \cup J,$ for $i \in I_+,$ determine simplices 
in $\Ta$ and 
the sets $(I \setminus i) \cup J,$ for $i \in I_-,$ 
determine simplices in $\Ta^{\prime}.$ 

\medskip

\begin{proposition}\cite[Prop 7.2.12]{GKZbook} \label{prop:subd}
The polyhedral subdivision
$\Fa=\Fa(\Ta, \Ta^\prime)$
corresponding to the edge of the secondary polytope
joining the vertices determined by $\Ta$ and $\Ta'$ consists
of the simplices $(conv(K),K)$ which $\Ta$ and $\Ta'$
have in common and the polyhedra $(conv(I \cup J), I\cup J)$
for all separating subsets $J \subset A \setminus I.$
\end{proposition}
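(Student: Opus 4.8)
\begin{proof*}
The plan is to reconstruct this statement, due to Gelfand--Kapranov--Zelevinsky, from the duality between faces of the secondary polytope and regular polyhedral subdivisions, combined with the local geometry of the circuit $I$. Write $\Pa^{*}$ for the family of marked polytopes described in the statement: the marked simplices $(\conv(K),K)$ common to $\Ta$ and $\Ta'$, together with the marked polytopes $(\conv(I\cup J),I\cup J)$ indexed by the separating subsets $J\subset A\setminus I$. Under the correspondence between faces of $S(A)$ and regular polyhedral subdivisions of $(Q,A)$ established in \cite[Ch.~7]{GKZbook}, the edge joining $\phi_{\Ta}$ and $\phi_{\Ta'}$ corresponds to the unique regular subdivision $\Fa$ whose vertices are exactly $\phi_{\Ta}$ and $\phi_{\Ta'}$, i.e.\ whose only refining triangulations are $\Ta$ and $\Ta'$. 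It therefore suffices to check that $\Pa^{*}$ is a polyhedral subdivision in the sense of Definition \ref{def:polysub}, that its only refining triangulations are $\Ta$ and $\Ta'$, and that it is regular with $C(\Pa^{*})$ equal to the common wall $C(\Ta)\cap C(\Ta')$ of the secondary fan.

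First I would record the relevant circuit geometry. Since $\conv((I\setminus i)\cup J)$ is a maximal simplex of the $(d-1)$-dimensional triangulation $\Ta$, every separating set $J$ has the same cardinality $|J|=d+1-|I|$, so $I\cup J$ consists of $d+1$ points spanning a $(d-1)$-dimensional affine space; because of the hyperplane condition on $A$, the linear relation \eqref{circuit} is also the (up to scalar, unique) affine dependence among these points, and it is supported on $I$. Hence $\conv(I\cup J)$ is the join of the circuit polytope $\conv(I)$ with the simplex $\conv(J)$; in particular $\conv(I)$ is a face of it, and its only triangulations with vertices in $I\cup J$ are the two joins $J\ast\Ta_{+}(I)=\{\conv((I\setminus i)\cup J):i\in I_{+}\}$ and $J\ast\Ta_{-}(I)=\{\conv((I\setminus i)\cup J):i\in I_{-}\}$, by the fact (recalled before the statement) that the circuit polytope has exactly the two triangulations $\Ta_{\pm}(I)$. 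Recalling that in a modification along $I$ the triangulations $\Ta$ and $\Ta'$ agree outside $\bigcup_{J}\conv(I\cup J)$ and restrict there to $J\ast\Ta_{+}(I)$ and $J\ast\Ta_{-}(I)$ respectively, it is then routine to verify that $\Pa^{*}$ satisfies the axioms of Definition \ref{def:polysub}: its cells cover $Q$ and any two of them meet in a common face, since $\Ta$ itself provides a simultaneous simplicial refinement of the whole family.

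Next I would determine the refining triangulations of $\Pa^{*}$. That $\Ta$ refines $\Pa^{*}$ is immediate: subdividing each cell $(\conv(I\cup J),I\cup J)$ by $J\ast\Ta_{+}(I)$ and keeping the common simplices recovers precisely the maximal simplices of $\Ta$, and likewise $J\ast\Ta_{-}(I)$ recovers $\Ta'$. Conversely, any triangulation refining $\Pa^{*}$ keeps each common simplex and restricts on each cell $\conv(I\cup J)$ to one of its only two triangulations, hence amounts to a choice of sign $\varepsilon_{J}\in\{+,-\}$ for every separating set $J$. These choices are coupled: for two separating sets $J_{1},J_{2}$ the cells $\conv(I\cup J_{1})$ and $\conv(I\cup J_{2})$ meet along a common face of $\Pa^{*}$ that contains $\conv(I)$, and the triangulation induced on the face $\conv(I)$ from the $J_{k}$-side is $\Ta_{\varepsilon_{J_{k}}}(I)$; since $\Ta_{+}(I)\neq\Ta_{-}(I)$, compatibility forces $\varepsilon_{J_{1}}=\varepsilon_{J_{2}}$. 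Thus the uniform choices $\varepsilon\equiv+$ and $\varepsilon\equiv-$, giving $\Ta$ and $\Ta'$, are the only refining triangulations of $\Pa^{*}$, so $\Pa^{*}$ is the finest common coarsening of $\Ta$ and $\Ta'$.

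Finally I would establish regularity and the identification $\Fa=\Pa^{*}$ simultaneously. Since $\phi_{\Ta}$ and $\phi_{\Ta'}$ are joined by an edge of $S(A)$, the maximal cones $C(\Ta)$ and $C(\Ta')$ of the secondary fan share a codimension-one wall $W$, which is the cone $C(\Fa)$ of the edge subdivision. For a function $\psi$ in the relative interior of $W$, its concave envelope $g_{\psi}$ is affine on every simplex of $\Ta$ and, likewise, on every simplex of $\Ta'$; hence it is affine on each cell $\conv(I\cup J)=\conv(I)\ast\conv(J)$ — being simultaneously affine on the two crossing triangulations $J\ast\Ta_{\pm}(I)$ of that cell — while remaining strictly concave across every facet of $\Ta$ not interior to such a cell. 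Thus the subdivision induced by $\psi$ is exactly $\Pa^{*}$; since that induced subdivision is $\Fa$ (because $W=C(\Fa)$), we get $\Fa=\Pa^{*}$, and in particular $\Pa^{*}$ is regular. The genuinely delicate step, and the one I expect to require real work, is the claim that a generic $\psi$ in the relative interior of $W$ is strictly concave across every facet not interior to a circuit cell — equivalently, that passing from the interior of $C(\Ta)$ to the wall $W$ dissolves exactly the facets interior to the cells $\conv(I\cup J)$ and no others. This is precisely where the minimality of the circuit $I$ and the uniqueness of the affine dependence \eqref{circuit} among the points of $I\cup J$ enter.
\end{proof*}
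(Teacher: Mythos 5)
The paper itself gives no proof of this proposition: it is quoted verbatim from Gelfand--Kapranov--Zelevinsky \cite[Prop.\ 7.2.12]{GKZbook} as background, so there is no internal argument to compare yours against; your attempt has to stand on its own as a reconstruction of the GKZ proof. Much of your architecture is sound: the dimension count showing $|I\cup J|=d+1$ with a unique affine dependence supported on $I$, the join structure $\conv(I\cup J)=\conv(I)\ast\conv(J)$ with $\conv(I)$ a face of each cell, the resulting two triangulations $J\ast\Ta_{\pm}(I)$ of each cell, and the sign-coupling argument via the common face $\conv(I)$ showing that $\Ta$ and $\Ta'$ are the only triangulations refining your candidate subdivision $\Pa^{*}$ -- these are all correct and are the right ingredients.

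But the proof has a genuine gap, which you acknowledge yourself in the last sentences. Establishing that $g_{\psi}$ is affine on every cell of $\Pa^{*}$ for $\psi$ on the wall $W=C(\Ta)\cap C(\Ta')$ only shows that $\Pa^{*}$ refines the edge subdivision $\Fa$; the proposition is the equality, and for that you must rule out that the domains of linearity of a generic $\psi\in\operatorname{relint}(W)$ merge two or more cells of $\Pa^{*}$ (equivalently, that the only ``bend'' functionals $\ell_{\tau}$, $\tau$ an interior facet of $\Ta$, that vanish identically on $W$ are those of facets interior to the circuit cells). This is precisely the content of the GKZ analysis of which facets dissolve at the wall, and it is where the minimality of the circuit is actually used; you name it but do not prove it. Nor can it be bypassed by your face-correspondence remark: to say that $\Pa^{*}$ corresponds to a face of $S(A)$ at all, you need $\Pa^{*}$ to be a \emph{regular} subdivision, and your only route to regularity is the same unproven identification with the subdivision induced by $\psi$. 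A secondary caveat: you take as given that $\Ta$ and $\Ta'$ agree outside $\bigcup_{J}\conv(I\cup J)$; the paper's background only records that the sets $(I\setminus i)\cup J$ give simplices of $\Ta$ (resp.\ $\Ta'$), so unless you import GKZ's precise definition of a modification along a circuit, this assumption is dangerously close to part of the statement being proved. As it stands, the write-up is a correct reduction of the proposition to its hardest step, not a proof of it.
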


\subsection{Principal $A$-Determinants}
Define $\nabla_{A}$ as the Zariski closure in $\CC^n$
of the set of polynomials $f=\sum_{1 \leq j \leq n} 
a_i x^{v_i}$ in $\CC[x_1, \ldots, x_d]$
such that there exists some
$y \in (\CC^{\times})^n$ with the property that
$f=0$ is singular at $y.$
By definition, the discriminant $\Delta_{A} \in \ZZ[a_1, \ldots, 
a_n]$ is the irreducible polynomial (defined up to a sign)
whose zero set is given by the union of the irreducible
codimension $1$ components of $\nabla_{A}.$ For the case 
codim$\nabla_{\cA} >1,$ one sets  $\nabla_{\cA}=1.$ 

\medskip

For any non-empty face $\Gamma$ of the polytope $Q,$
let $i(\Gamma)$ denote the index of the sublattice
$\ZZ( A \cap \Gamma)$ inside the lattice 
$N \cap \RR \Gamma,$
\begin{equation}
i(\Gamma):= [N \cap \RR \Gamma: \ZZ (A \cap \Gamma)].
\end{equation}
Recall that 
$S=\ZZ_{\geq 0} A$ is the semigroup generated by $A.$
Furthermore, let $S/\Gamma$ denote the image semigroup 
of $S$ in the quotient free group $N_\RR/ \RR \Gamma,$
with $N_\RR= N \otimes_\ZZ \RR.$ The semigroup 
$S/\Gamma$ generates a pointed cone in 
$N_\RR/ \RR \Gamma,$ so we can define 
\begin{equation}\label{def:u}
u(\Gamma):= {\rm vol} ( {\rm conv} \big(S/\Gamma) \setminus 
{\rm conv} (S/\Gamma \setminus \{0\}) \big),
\end{equation}
where the volume form on $N_\RR/ \RR \Gamma$
is induced from the standard volume form 
on $N.$ 

\begin{definition}\cite[Theorem 10.1.2]{GKZbook}\label{def:ea}
The {\it principal $A$-determinant} $E_A$ is the 
polynomial in $\ZZ[a_1, \ldots, a_n]$
defined as 
\begin{equation}
E_A:= \prod_{\Gamma} (\Delta_{A \cap \Gamma})^{u(\Gamma) \cdot i(\Gamma)}, 
\end{equation}
where the product is taken over all the non-empty faces of the
polytope $Q={\rm conv} (A).$
\end{definition}

Note that we have taken the liberty of not using the definition 
given in \cite{GKZbook}. Of course, the above definition 
is equivalent to the one given in that book. For the faces $\Gamma$ such that 
codim$\nabla_{\cA} >1,$ the corresponding factor 
in the product is equal to $1.$ This implies that
the product is taken over the vertices and the faces $\Gamma$
such that $A \cap \Gamma$ is not simplicial and there 
is no proper subface $\Gamma^{\prime}$ of $\Gamma$ 
such that the linear 
relations among the elements of $A \cap \Gamma$ are 
linear relations among the elements of $A \cap \Gamma^{\prime}.$

\medskip

The principal $A$--determinant has the following remarkable property. 
\begin{theorem}\cite[Theorem 10.1.4]{GKZbook}
For a given set $A,$ the Newton polytope of $E_A$ coincides with
the secondary polytope $S(A).$ In particular, the vertices of the 
Newton polytope of $E_A$ are given by the characteristic functions 
$\phi_\Ta$ for all the regular triangulations $\Ta$ of $(Q,A).$
\end{theorem}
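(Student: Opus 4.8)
\emph{Proof idea.} Recall that the Newton polytope of a polynomial is the convex hull of the exponents of its monomials, and that a vertex of this polytope is exactly the (one--point) support of the initial form $\operatorname{in}_\omega(E_A)$ for a generic weight $\omega$ in the interior of its normal cone, every such support being a vertex. The plan is therefore to prove the single statement: for every generic $\omega\in(\RR^A)^\dual$ the initial form $\operatorname{in}_\omega(E_A)$ is, up to sign, a \emph{single} monomial $x^{\phi_{\Ta(\omega)}}$, where $\Ta(\omega)$ is the regular triangulation of $(Q,A)$ attached to the maximal cone of the secondary fan $F(A)$ containing $\omega$. Granting this, every vertex of $\operatorname{Newt}(E_A)$ is some $\phi_\Ta$, whence $\operatorname{Newt}(E_A)\subseteq\operatorname{conv}\{\phi_\Ta\}=S(A)$; conversely each $\phi_\Ta$ is the exponent of an actual monomial of $E_A$ and so lies in $\operatorname{Newt}(E_A)$, giving $S(A)\subseteq\operatorname{Newt}(E_A)$. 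The assertion about the vertices is then immediate from Theorem 7.1.7 of \cite{GKZbook} recalled above.

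To identify $\operatorname{in}_\omega(E_A)$, fix $\omega$ in the relative interior of a maximal secondary cone $C(\Ta)$ and substitute $a_i\rightsquigarrow a_i\,t^{\omega_i}$, so that $E_A$ becomes a Laurent series in $t$ whose lowest--order coefficient is $\pm\operatorname{in}_\omega(E_A)$. Geometrically, $f_t=\sum_i a_i t^{\omega_i}x^{v_i}$ is a toric degeneration whose limiting regular subdivision is precisely $\Ta$, and as $t\to0$ the hypersurface $\{f_t=0\}$ in the torus, together with all of its tangencies and singularities along the torus orbit closures, localizes onto the cells of $\Ta$. Since $E_A$ is built out of the face discriminants $\Delta_{A\cap\Gamma}$ precisely by recording such conditions, this degeneration yields a product formula
\[
\operatorname{in}_\omega(E_A)\;=\;\pm\prod_{\sigma\in\Ta}E_{A_\sigma},
\]
where $A_\sigma$ denotes the vertex set of the maximal simplex $\sigma$. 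This is the compatibility of $E_A$ with regular subdivisions and face restrictions developed in Chapter 10 of \cite{GKZbook}, specialized from the general two--cell subdivisions of Proposition \ref{prop:subd} to a full triangulation.

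It remains to treat the base case. Each $A_\sigma$ is affinely independent, so $\operatorname{codim}\nabla_{A_\sigma}>1$ and every proper--face discriminant of $A_\sigma$ other than the coordinate variables at the vertices is trivial; a direct computation from Definition \ref{def:ea} then gives $E_{A_\sigma}=\pm\prod_{i\in\operatorname{Vert}(\sigma)}a_i^{\vol(\sigma)}$. Multiplying over $\sigma\in\Ta$ and collecting the exponent of $a_v$ gives $\sum_{v\in\operatorname{Vert}(\sigma)}\vol(\sigma)=\phi_\Ta(v)$, i.e.\ $\operatorname{in}_\omega(E_A)=\pm\,x^{\phi_\Ta}$, a single monomial. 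Finally, every regular triangulation arises as $\Ta(\omega)$ with $\omega$ ranging over the interior of its secondary cone, and these cones cover $(\RR^A)^\dual$; this establishes the statement of the first paragraph and hence the theorem.

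The main obstacle is the product formula for $\operatorname{in}_\omega(E_A)$: one has to control rigorously how the singular locus of $\{f_t=0\}$ and all of its tangencies to coordinate subtori degenerate as $t\to0$, and to match the $t$--valuations on the two sides cell by cell. A way to side--step it is to prove the two inclusions independently --- the inclusion $\operatorname{Newt}(E_A)\subseteq S(A)$ by a direct weight estimate on each factor in $E_A=\prod_\Gamma\Delta_{A\cap\Gamma}^{u(\Gamma)i(\Gamma)}$ (each exponent of $\Delta_{A\cap\Gamma}$ being, against every $\omega$, dominated by the corresponding value of $\phi_{\Ta(\omega)}$), and the reverse inclusion by the degeneration above applied only for $\omega\in C(\Ta)^\circ$ --- and then combine ``every $\phi_\Ta$ is attained'' with ``contained in $S(A)$'' to force equality. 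Either way, the combinatorial heart is the identification, through normalized volumes, of the exponent produced by a simplicial degeneration with the characteristic function $\phi_\Ta$.
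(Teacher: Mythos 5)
The paper does not prove this statement at all --- it is quoted directly from \cite[Theorem 10.1.4]{GKZbook} --- so there is no internal argument to compare against; what you have written is essentially the standard GKZ route, and it is exactly the coefficient--restriction machinery (Theorem 10.1.12' of \cite{GKZbook}) that the paper itself invokes later in the proof of Theorem \ref{thmm}. As a sketch your reduction is correct: show that for $\omega$ generic in the interior of a maximal secondary cone $C(\Ta)$ the initial form of $E_A$ is a single monomial with exponent $\phi_\Ta$, conclude both inclusions between ${\rm Newt}(E_A)$ and $S(A)$, and quote Theorem 7.1.7 for the vertex statement. Two caveats. First, keep one consistent lattice convention in the two steps you multiply together: in the factorization as the paper uses it, the factors are the principal determinants of $A_\sigma$ taken with respect to their own lattice $\ZZ A_\sigma$ and they carry the exponent $[N:\ZZ A_\sigma]$; in that convention a maximal simplex is unimodular, $E_{A_\sigma}=\pm\prod_{i\in{\rm Vert}(\sigma)}a_i$ with exponent $1$ on each $a_i$, and the volume enters only through $[N:\ZZ A_\sigma]=\vol(\sigma)$. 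Your version instead drops the index exponents and computes $E_{A_\sigma}=\pm\prod_i a_i^{\vol(\sigma)}$ from Definition \ref{def:ea} read with the ambient lattice $N$; this lands on the same exponent $\phi_\Ta(v)=\sum_{\sigma\ni v}\vol(\sigma)$ precisely because $[N:\ZZ A_\sigma]=\vol(\sigma)$, but note that Definition \ref{def:ea} as stated assumes the $A$-set hypothesis (cf.\ Remark \ref{rm1}), and mixing the two conventions would count the volumes twice. Second, the product formula for the initial form, which you rightly isolate as the main obstacle, is the substantive content of the theorem: in GKZ's own development it is established together with, not prior to, Theorem 10.1.4, so citing Chapter 10 for it makes your argument a reduction within the same circle of results rather than an independent proof --- which is acceptable here, since the paper likewise treats the theorem as a black box, but it should be stated as such.
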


\section{K-theory and discriminant intersection multiplicities}
\label{sec:k}

We will now compute the dimensions of the Grothendieck rings 
of some toric stacks involved in the conjecture discussed 
in this paper. 
Let $\Gamma$ be a non-empty face of the polytope $Q={\rm conv}(A).$ 

\begin{definition}\label{def:fangamma}
The stacky fan $\SSigma_\Gamma=
(\Sigma_\Gamma, \{ \pi (v_i) \}_{i \in {\mathcal I}}),$ with
$\pi : N \to N / \ZZ(A \cap \Gamma)$ the canonical map, is defined by 
a simplicial fan
$\Sigma_\Gamma$ in the real linear space 
$(N / \ZZ(A \cap \Gamma) \otimes \RR= N_\RR/\RR\Gamma$
with rays generated by the images of the 
vectors $v_i$ for $i \in \Ia.$
The set $\mathcal I$ consists of all indices $i$
such that the image of $v_i$ in $N_\RR/\RR\Gamma$
is contained in the closure of the set
$$
{\rm conv} \big(S/\Gamma) \setminus 
{\rm conv} (S/\Gamma \setminus \{0\}),
$$
where $S/\Gamma$ is the image semigroup 
$S=\ZZ A$ in $N_\RR/ \RR \Gamma.$
\end{definition}

It is important to note that the set $A$ 
and the nonempty face $\Gamma$ uniquely determine 
the vectors $v_i, i \in \Ia,$ but the cones 
of the simplicial fan $\Sigma_\Gamma$ are not uniquely determined. 
However, the following statement holds. 

\begin{proposition}
For any two choices of stacky fans $\SSigma_\Gamma$ 
and $\SSigma_\Gamma^\prime$ as above,
the bounded derived categories of coherent categories $D^b(X_{\SSigma_\Gamma})$ 
and $D^b(X_{\SSigma_\Gamma^\prime})$ are equivalent.  
\end{proposition}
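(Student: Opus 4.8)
The plan is to show that any two stacky fans $\SSigma_\Gamma$ and $\SSigma_\Gamma^\prime$ (which share the same set of ray generators $\{\pi(v_i)\}_{i\in\Ia}$, differing only in which cones are filled in) are related by a sequence of toric crepant birational transformations, and then to invoke the fact that such transformations induce derived equivalences of the associated toric DM stacks. First I would observe that both $\Sigma_\Gamma$ and $\Sigma_\Gamma^\prime$ are simplicial fans supported on the same cone, namely the cone $\overline{\conv(S/\Gamma)}$ in $N_\RR/\RR\Gamma$ spanned by the $\pi(v_i)$, $i\in\Ia$; this common support is exactly what Definition \ref{def:fangamma} guarantees. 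So the two fans are two different simplicial subdivisions, with the same vertex set, of one and the same polytopal cone.

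Next I would recall the combinatorial fact (a standard result on regular/non-regular triangulations, but here we do not even need regularity — we only need that any two triangulations of a point configuration with a fixed vertex set are connected by bistellar flips, or at worst by a common refinement followed by simplicial subdivisions) that any two such simplicial subdivisions are connected by a chain of subdivisions of the kind that, in toric language, correspond to crepant birational modifications: each elementary step is either a "generalized flop" across a wall (a modification along a circuit, in the terminology already set up in Section \ref{sec1}) or a stacky blow-up/blow-down along a subcone, all of which preserve the canonical class because all the relevant ray generators lie on the hyperplane-at-distance-$1$ slice inherited from the $A$-set condition — here, the image slice in $N_\RR/\RR\Gamma$. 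The Calabi–Yau/crepant property is the reason the derived category is insensitive to the choice; it is precisely the Gorenstein hyperplane condition on $A$, pushed down to the quotient, that makes every new ray generator lie on the boundary of the shared polytope rather than in its interior in a way that would change $K_X$.

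Then I would apply the derived-equivalence input: for toric DM stacks, a crepant birational transformation between $X_{\SSigma}$ and $X_{\SSigma'}$ coming from such a modification induces an equivalence $D^b(X_{\SSigma})\simeq D^b(X_{\SSigma'})$. This is the toric-stack version of the Bondal–Orlov / Kawamata–type results; in the stacky toric setting one can cite Kawamata's work on derived equivalences for toric stacks, or construct the equivalence directly via a Fourier–Mukai kernel supported on the fiber product, or — most in keeping with the GKZ/variation-of-GIT philosophy of this paper — via the window/magic-window technology of Halpern-Leistner, Ballard–Favero–Katzarkov, and Špenko–Van den Bergh, which produces, for each such wall-crossing, an explicit equivalence between the two GIT quotients. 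Composing the equivalences along the chain yields $D^b(X_{\SSigma_\Gamma})\simeq D^b(X_{\SSigma_\Gamma^\prime})$.

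The main obstacle, and the step deserving the most care, is the first combinatorial claim: that the two simplicial fans are connected by a chain of \emph{crepant} toric modifications, rather than merely by arbitrary subdivisions. One must check that the common support is itself a rational polyhedral cone over a polytope — so that "triangulation of a polytope with fixed marked vertices" is the right framework and the connectedness-by-flips results apply — and that no intermediate fan in the chain needs a ray outside $\Ia$, i.e. the chain can be taken within the class of stacky fans allowed by Definition \ref{def:fangamma}. The subtlety is exactly the one flagged in the paragraph after that definition: the cones are not canonical, but the ray generators are; so one works with the (possibly non-convex, but here I would argue convex) region $\overline{\conv(S/\Gamma)}\setminus\conv(S/\Gamma\setminus\{0\})$, verify it is the cone over a lattice polytope whose lattice points relevant to us are precisely the $\pi(v_i)$, $i\in\Ia$, and then the standard theory of (not-necessarily-regular) triangulations of this polytope — all connected by bistellar operations — finishes the reduction. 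Once that combinatorial bridge is in place, the derived-equivalence conclusion is a formal consequence of the crepant-wall-crossing equivalences cited above.
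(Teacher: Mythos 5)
Your overall route is the same as the paper's: the two stacky fans are connected by a chain of toric crepant modifications, each of which induces a derived equivalence by the Bondal--Orlov/Kawamata circle of results, and one composes along the chain. The paper's own proof is exactly this, stated in two sentences with citations to \cite{BO} and \cite{Kaw}, so there is no difference of strategy. Your additional discussion of \emph{why} the chain exists is where the one real problem sits.

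The step that fails as stated is the combinatorial claim that ``any two triangulations of a point configuration with a fixed vertex set are connected by bistellar flips, and we do not even need regularity.'' This is false in general: Santos constructed point configurations admitting triangulations with no flips at all, and flip graphs of all triangulations that are disconnected. Flip-connectivity is only guaranteed for \emph{regular} triangulations, where it follows from the connectedness of the edge graph of the secondary polytope of the configuration $\{\pi(v_i)\}_{i\in\Ia}$. Your fallback of ``a common refinement followed by simplicial subdivisions'' does not obviously repair this, since a common refinement will in general require rays outside the set $\Ia$, leaving the class of fans permitted by Definition \ref{def:fangamma}. There are two clean fixes. Either restrict (as is implicit in the paper's setting) to fans $\Sigma_\Gamma$ coming from regular triangulations, where secondary-polytope connectivity applies; or bypass flip-connectivity entirely by observing that $X_{\SSigma_\Gamma}$ and $X_{\SSigma_\Gamma^{\prime}}$ are smooth toric DM stacks with the same support and the same ray data lying on a common height-one hyperplane, hence are $K$-equivalent, and then invoke Kawamata's theorem that toric log crepant birational maps of toric DM stacks induce derived equivalences --- which is what the citation to \cite{Kaw} actually delivers and does not require factoring the birational map into elementary flops. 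With either repair your argument closes; the rest (convexity of the common support cone, crepancy from the hyperplane condition, composing the equivalences) is fine.
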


\begin{proof} The stacky fans $\SSigma_\Gamma$ 
and $\SSigma_\Gamma^\prime$ are connected by a finite sequence of 
toric birational flops. Each of them induces an equivalence of 
the bounded derived categories of coherent sheaves 
on the corresponding DM stacks \cite{BO}, \cite{Kaw}, so the result follows.
\end{proof}

Moreover, for any choice of the stacky fan $\SSigma_\Gamma,$
we can prove the  
following result.

\begin{proposition}\label{prop:kgamma}
The rank of the Grothendieck group
of the DM stack $X_{\SSigma_\Gamma}$ 
is equal to the product $u(\Gamma) \cdot i(\Gamma).$
\end{proposition}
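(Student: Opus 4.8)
The plan is to reduce the assertion to a combinatorial identity about normalized volumes and lattice indices, via the standard description of the rank of the Grothendieck group of a smooth toric Deligne--Mumford stack. Recall that if $X_\SSigma$ is a smooth toric DM stack whose simplicial fan $\Sigma$ is supported on a single cone (so that $X_\SSigma$ is proper over an affine toric variety), then
\begin{equation*}
\rk K_0(X_\SSigma)=\sum_{\sigma}\big[\,N_\SSigma:{\textstyle\sum_{v_i\in\sigma}}\ZZ v_i\,\big],
\end{equation*}
the sum running over the maximal cones $\sigma$ of $\Sigma$, where $N_\SSigma$ is the ambient (possibly non--torsion--free) lattice of the stacky fan. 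I would first record this formula, citing the Borisov--Horja computation of the $K$-theory of smooth toric DM stacks, or, if a reference in exactly this non--complete generality is unavailable, deriving it from the toric stratification of $X_\SSigma$ together with the localization exact sequences in $K$-theory. Applying it to $\SSigma_\Gamma$, whose ambient lattice is $N':=N/\ZZ(A\cap\Gamma)$, turns the proposition into the statement
\begin{equation*}
\sum_{\sigma}[\,N':L_\sigma\,]=u(\Gamma)\cdot i(\Gamma),
\end{equation*}
where $L_\sigma\subset N'$ denotes the subgroup generated by the stacky ray generators $\pi(v_{i_1}),\dots,\pi(v_{i_r})$ of a maximal cone $\sigma$ of $\Sigma_\Gamma$.

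The next step is to split off the torsion. Put $\bar N:=N/(N\cap\RR\Gamma)$, the maximal torsion--free quotient of $N'$; its kernel $T\subset N'$ has order $i(\Gamma)$, essentially by the definition of $i(\Gamma)$. Because $\Sigma_\Gamma$ is supported on the full--dimensional cone $\pi(K)$, each of its maximal cones $\sigma$ has exactly $r=\rk\bar N$ rays, and the generators $\pi(v_{i_1}),\dots,\pi(v_{i_r})$ remain $\QQ$-linearly independent after passing to $\bar N$; hence every element of $L_\sigma$ that dies in $\bar N$ is already zero, i.e.\ $L_\sigma\cap T=0$. Therefore $[\,N':L_\sigma\,]=|T|\cdot[\,\bar N:\bar L_\sigma\,]=i(\Gamma)\cdot[\,\bar N:\bar L_\sigma\,]$, where $\bar L_\sigma$ is the image of $L_\sigma$ in $\bar N$, and it remains to prove the volume identity $\sum_\sigma[\,\bar N:\bar L_\sigma\,]=u(\Gamma)$.

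For the volume identity, write $P:=\conv(S/\Gamma\setminus\{0\})$ and $R:=\conv(S/\Gamma)\setminus\conv(S/\Gamma\setminus\{0\})$, so that $u(\Gamma)=\vol(R)=\vol(\overline R)$, the normalized volume being taken with respect to $\bar N$. The geometric heart of the argument is the claim that every vertex of the polyhedron $P$ is one of the $\pi(v_i)$: writing such a vertex $p\in S/\Gamma\setminus\{0\}$ as $\pi$ of a nonnegative integer combination of the $v_i$ and discarding the summands lying in $\RR\Gamma$, if the remaining combination involves more than a single $v_i$ one may write $p=\pi(v_k)+q$ with $\pi(v_k)\in S/\Gamma\setminus\{0\}$ and $q$ a nonzero element of the pointed cone $\pi(K)$; then, noting $\pi(v_k)+2q\in S/\Gamma\setminus\{0\}\subseteq P$, the expression $p=\tfrac12\big((\pi(v_k)+2q)+\pi(v_k)\big)$ exhibits $p$ as a proper midpoint of two distinct elements of $P$ (distinctness uses that $S/\Gamma$ has no nonzero torsion element), contradicting extremality. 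Since $\overline R$ is the convex hull of $0$ together with the part of $\partial P$ visible from the origin, and the vertices of that part of $\partial P$ are among the $\pi(v_i)$ and, being in $\overline R$, have indices in $\mathcal{I}$, we obtain $\overline R=\conv\big(\{0\}\cup\{\pi(v_i):i\in\mathcal{I},\ \pi(v_i)\neq0\}\big)$. Consequently, for \emph{any} admissible choice of $\Sigma_\Gamma$ the simplices $\conv(0,\pi(v_{i_1}),\dots,\pi(v_{i_r}))$ indexed by the maximal cones $\sigma$ have pairwise disjoint interiors — each being contained in the corresponding cone of $\Sigma_\Gamma$ — and, by convexity of $\overline R$, their union is all of $\overline R$; since the normalized volume of each such simplex equals $|\det(\pi(v_{i_1}),\dots,\pi(v_{i_r}))|=[\,\bar N:\bar L_\sigma\,]$, summation gives $\sum_\sigma[\,\bar N:\bar L_\sigma\,]=\vol(\overline R)=u(\Gamma)$. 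Combined with the torsion computation this yields $\rk K_0(X_{\SSigma_\Gamma})=i(\Gamma)\cdot u(\Gamma)$, and as a byproduct re-establishes the independence of the answer from the choice of $\Sigma_\Gamma$.

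The principal obstacle is the very first step — having, or establishing, a clean form of the rank formula for $K_0$ of a smooth toric DM stack in the non--complete setting. Deriving it by hand from the localization sequences requires some care, since the torus--orbit strata of $X_\SSigma$ are products of copies of $\mathbb{G}_m$ with classifying stacks of finite abelian groups, and the higher $K$-theory of the open strata enters the connecting homomorphisms; the resulting rank count is robust, but the bookkeeping is not entirely automatic. A lesser point requiring attention is the status of the indices $i\in\mathcal{I}$ with $\pi(v_i)=0$ — these contribute no ray of $\Sigma_\Gamma$ and are harmless — together with the precise normalization of $\vol$ on $N_\RR/\RR\Gamma$ matching the lattice $\bar N$.
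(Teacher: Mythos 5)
Your overall route is the same as the paper's: express $\rk K_0$ of the toric DM stack as a sum over maximal cones of lattice indices (the paper quotes \cite{BH-K}, \cite{Jiang} and \cite[Prop.\ 3.20]{DKK} for this), split off a torsion factor of order $i(\Gamma)$, and identify the remaining sum with $u(\Gamma)$. Your cone-by-cone torsion computation ($L_\sigma\cap T=0$, hence $[N':L_\sigma]=i(\Gamma)\cdot[\bar N:\bar L_\sigma]$) and your argument that every vertex of $P=\conv(S/\Gamma\setminus\{0\})$ is one of the $\pi(v_i)$ are correct and are finer-grained than what the paper writes.

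However, the last step contains a genuine error: $\overline R$ is in general \emph{not} convex, and it is in general \emph{not} equal to $\conv\big(\{0\}\cup\{\pi(v_i):i\in\mathcal I\}\big)$. Take $A=\{(1,0,0),(1,2,0),(1,1,1),(1,0,3)\}$ and $\Gamma$ the vertex $(1,0,0)$; then $S/\Gamma\subset\ZZ^2$ is generated by $(2,0),(1,1),(0,3)$, the region $\overline R$ is the non-convex union of the triangles $\conv\{0,(2,0),(1,1)\}$ and $\conv\{0,(1,1),(0,3)\}$ with $u(\Gamma)=2+3=5$, while $\conv\big(\{0\}\cup\{\pi(v_i)\}\big)$ is the triangle $\conv\{0,(2,0),(0,3)\}$ of volume $6$. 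What is true is only the inclusion: since $\conv$ of the generators of a maximal cone $\sigma$ lies inside the convex set $P$, each simplex $\conv(0,\pi(v_{i_1}),\dots,\pi(v_{i_r}))$ \emph{contains} $\overline R\cap\sigma$, so without further hypotheses you only get $\sum_\sigma[\bar N:\bar L_\sigma]\ge u(\Gamma)$, with equality precisely when every maximal cone of $\Sigma_\Gamma$ has its generators on a common face of $P$ visible from the origin, i.e.\ when $\Sigma_\Gamma$ refines the ``lower hull'' face structure of $P$ (in quotient dimension $2$, as in the example, this is automatic because the triangulation with the prescribed rays is unique, but in dimension $\ge 3$ a flipped triangulation violates it and the sum strictly exceeds $u(\Gamma)$). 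So you need to add this compatibility requirement on $\Sigma_\Gamma$ (it is implicit in the paper's Definition \ref{def:fangamma}, and is exactly what makes the different choices crepant-birational in Proposition preceding \ref{prop:kgamma}) and replace the appeal to convexity of $\overline R$ by the statement that $\overline R$ is the union of the pyramids with apex $0$ over the visible faces of $P$, each of which is then a union of the far facets of the simplices of $\Sigma_\Gamma$ lying over it. The paper's own proof does not make this point explicit either (it simply asserts that formula (\ref{def:u}) identifies $u(\Gamma)$ with the sum of volumes of the maximal cones), but your proposal, as written, justifies the identity by two claims that are false in general, so the step must be repaired as indicated.
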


\begin{proof}
Since $N$ is a lattice, 
the quotient group $N/ N \cap \RR \Gamma$ is torsion free.
This implies that the torsion 
part of the quotient $N / \ZZ(A \cap \Gamma)$ is isomorphic 
to the torsion part of the quotient 
$N \cap \RR \Gamma/ \ZZ (A \cap \Gamma).$ The order
of the torsion part is given by the index $i(\Gamma).$ 
The discussion in section 6 of \cite{BH-K}, as well as 
the work of Jiang-Tseng \cite{Jiang}, show that the rank of
$K_0(X_{\SSigma_\Gamma})$ is equal to $K_0(X_{\Sigma_\Gamma})
\cdot i(\Gamma).$

In order to finish the proof, we have to show that the dimension of
$K_0(X_{\Sigma_\Gamma})$
is equal to $u(\Gamma).$ Formula \ref{def:u}
implies that $u(\Gamma)$ is equal to the sum of the volumes of 
the (simplicial) maximal dimensional cones in the fan $\Sigma_\Gamma.$ 
The result follows from 
\cite[Proposition 3.20]{DKK} which shows that the desired rank 
is equal to the sum of the volumes of the maximal dimensional cones in the fan 
$\Sigma_\Gamma.$ 
\end{proof}

\medskip

We now consider two triangulations $\Ta$ and $\Ta^\prime$ 
corresponding to vertices of the secondary polytope 
which are joined by an edge $F$. According to Proposition \ref{prop:subd},
there exists a circuit $I=\{ v_i \}, i \in I,$ associated to this
edge. Let $\ZZ I: = \sum _{i \in I} \ZZ v_i$ be the sublattice 
generated in $N$ by the vectors of the circuit. Recall that 
the difference between the
simplices in $\Ta$ and $\Ta^\prime$ is determined 
by the separating sets $J \subset A \setminus I$ (cf. 
Proposition \ref{prop:subd}). Note that for 
any separating set $J,$ the cone determined by the 
polytope conv$(J)$ is simplicial. Let $K \subset A$ denote 
the subset of elements in $A$ that belong to some separating set
associated to the edge $F.$

\begin{definition}\label{def:fanz} 
The stacky fan $\SSigma_F=
(\Sigma_F, \{ \pi (v_i) \}_{i \in K}),$ with
$\pi : N \to N / \ZZ I$ the canonical map,
is defined by the fan
$\Sigma_F$ in the real linear space 
$N_\RR/ \RR I$ whose maximal cones
are images under the map $\pi$ 
of cones over 
conv$(J),$ for some separating set $J.$
 \end{definition}

Denote by 
$Z_F$ the toric DM stack defined by the stacky fan 
$\SSigma_F.$ 
For the general theory of spherical functors,
see \cite{AL}.

\begin{proposition}\label{prop:ez}
There exists a ``wall monodromy" spherical functor $D^b (Z_F) \to D^b (X)$
where $X$ is the toric DM stack induced by either one 
of the triangulations $\Ta$ or $\Ta^\prime.$
\end{proposition}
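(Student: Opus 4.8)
The plan is to exhibit the passage from $\Ta$ to $\Ta^\prime$ across the edge $F$ as a \emph{balanced variation of GIT} and then to invoke the spherical functor construction of Halpern-Leistner--Shipman \cite{HLS} (equivalently the window formalism of \cite{BFK}, \cite{HL}, in the spirit of \cite{HHP}, together with the general formalism of spherical functors of \cite{AL}). Recall from Section \ref{sec1} that $X=X_\SSigma=[Z/G]$ with $G=\Hom(N^\prime,\CC^\times)$ acting on $\CC^n$ and $Z=Z_\Ta\subset\CC^n$ the ``$\Ta$--admissible'' open subset, i.e.\ the semistable locus for a stability parameter in the chamber $C(\Ta)$ of the secondary fan. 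Since the chambers $C(\Ta)$ and $C(\Ta^\prime)$ share the codimension--one wall $W$ dual to the primitive circuit relation $\ell=(l_i)_{i\in I}$ of (\ref{circuit}) (extended by $0$ outside $I$), a stability parameter $\theta_0$ in the relative interior of $W$ defines a toric DM stack $\Xa_0=[Z_0/G]$ carrying the residual one--parameter subgroup $\lambda=\lambda_\ell\colon\CC^\times\to G$ associated to $\ell\in\ker(\alpha\colon\ZZ^n\to N)$, and $X_\Ta$, $X_{\Ta^\prime}$ are recovered as the two GIT quotients obtained by perturbing $\theta_0$ to the two sides of $W$.

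The first step is to identify the center of this wall--crossing with $Z_F$. The cocharacter $\lambda$ acts on the coordinate $z_i$ of $\CC^n$ with weight $l_i$, which is nonzero exactly when $i\in I$; hence the $\lambda$--fixed substack of $\Xa_0$ is the quotient of $\{z\in Z_0 : z_i=0 \text{ for } i\in I\}$ by $G/\lambda(\CC^\times)$, a toric DM stack with cocharacter lattice $N/\ZZ I$. Matching fans, one checks that its maximal cones are precisely the images in $N_\RR/\RR I$ of the cones over $\conv(J)$ as $J$ runs over the separating sets for $\Ta,\Ta^\prime$: by Proposition \ref{prop:subd} the cells of the common refinement $\Fa(\Ta,\Ta^\prime)$ not shared by both triangulations are exactly the $(\conv(I\cup J),I\cup J)$, and deleting the $\lambda$--moving coordinates $z_i$ $(i\in I)$ turns each such cell into $(\conv(\pi(J)),\pi(J))$ with $\pi\colon N\to N/\ZZ I$, which is the data of the stacky fan $\SSigma_F$ of Definition \ref{def:fanz}. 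Thus the $\lambda$--fixed substack is $Z_F$; the possible non--reducedness of $Z_F$ and the torsion in $N/\ZZ I$ cause no trouble, as throughout one works with $G$--equivariant (resp.\ $G/\lambda(\CC^\times)$--equivariant) coherent sheaves.

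The second step is the balancing condition, which is precisely where the hyperplane condition on $A$ is used. The weights of $\lambda$ on the normal directions of $Z_F$ in $\Xa_0$ split into the $\lambda$--attracting weights $\{l_i:i\in I_+\}$ and the $\lambda$--repelling weights $\{-l_i:i\in I_-\}$, so the wall--crossing is balanced precisely when $\sum_{i\in I_+}l_i=\sum_{i\in I_-}|l_i|$; but applying the linear function $h$ with $h(v_i)=1$ for all $i$ to the circuit relation $\sum_{i\in I_+}l_iv_i=\sum_{i\in I_-}|l_i|v_i$ yields exactly this equality. By \cite{HLS}, a balanced GIT wall--crossing produces a spherical functor out of the fixed locus into the derived category of either GIT quotient, obtained by twisting on the fixed locus by the $\lambda$--weights, pushing forward along the attracting locus into $D^b(\Xa_0)$, and applying the grade--restriction window equivalence with $D^b$ of one of $X_\Ta$, $X_{\Ta^\prime}$. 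Applied with fixed locus $Z_F$, this produces the asserted wall--monodromy spherical functor $D^b(Z_F)\to D^b(X)$, and, up to the flop equivalence $D^b(X_\Ta)\simeq D^b(X_{\Ta^\prime})$, it is independent of whether $X$ comes from $\Ta$ or $\Ta^\prime$.

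I expect the main obstacle to be the combinatorial bookkeeping of the first step: one must verify, uniformly over the possibly non--simplicial $\Xa_0$ and in the stacky setting, that the critical GIT quotient at $\theta_0$ and its $\lambda$--fixed substack are genuinely governed by the subdivision $\Fa(\Ta,\Ta^\prime)$ --- that is, that the separating sets are exactly the subsets of $A\setminus I$ whose cones persist in $\Xa_0$ upon deleting $I$ --- together with the compatibility of the various lattice quotients with torsion. Once this dictionary is in place, the spherical property is formal from \cite{HLS} and the balancing identity above; a conceptual, GIT--free construction (for instance via a categorified braid factorization, cf.\ \cite{AHK}) would be preferable but is not required for the present statement.
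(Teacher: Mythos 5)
Your proposal is correct and follows one of the two routes the paper itself invokes: the paper's proof is a two-sentence citation of the EZ-twist argument of \cite{EZ} and of the balanced GIT wall-crossing machinery of \cite{HLS}, and your argument is a detailed implementation of the latter (identifying the $\lambda$-fixed substack of the critical quotient with $Z_F$ via the separating sets of Proposition \ref{prop:subd}, and deducing the balancing condition $\sum_{i\in I_+}l_i=\sum_{i\in I_-}|l_i|$ from the hyperplane condition on $A$). Nothing in your sketch conflicts with the paper; it simply supplies the details that the paper delegates to \cite{HLS}.
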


\begin{proof} The $EZ$-twist argument in \cite{EZ} is mostly formal 
and can be adapted to the toric DM stacky context. The result 
also follows from the work of Halpern-Leistner-Shipman \cite{HLS}. 
In the physics literature, the spherical twist induced 
by this spherical functor is known as the ``wall monodromy".
\end{proof}

\begin{proposition}\label{prop:ki}
The rank of the Grothendieck group
of the DM stack $Z_F$ 
is equal to the sum 
$$\sum_{J-{\rm sep}} [N: \ZZ (I \cup J)]$$
over all the separating sets $J.$
\end{proposition}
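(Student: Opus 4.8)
The plan is to reduce the claim to the computation already carried out in Proposition \ref{prop:kgamma}, applied to each simplicial subfan cut out by a separating set. First I would observe that, by Definition \ref{def:fanz}, the fan $\Sigma_F$ is glued from the simplicial cones $\pi(\sigma_J)$, where $\sigma_J$ denotes the cone over $\mathrm{conv}(J)$ and $J$ ranges over the separating sets associated to the edge $F$. Since each such cone is simplicial (as noted just before Definition \ref{def:fanz}), the rank of $K_0$ of the toric DM stack $Z_F$ is, by the same argument used in Proposition \ref{prop:kgamma} (via \cite[Proposition 3.20]{DKK} together with the torsion bookkeeping from \cite{BH-K}, \cite{Jiang}), equal to the sum over the maximal cones of $\Sigma_F$ of the (normalized) volume of each cone, each weighted by the order of the torsion of $N/\ZZ I$ contributed along that cone. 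The key point is that the volume of the image cone $\pi(\sigma_J)$ in $N_\RR/\RR I$, measured with the induced volume form, multiplied by the torsion index, equals precisely $[N : \ZZ(I \cup J)]$.

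Concretely, the second step is the index identity: for a separating set $J$, the cone over $\mathrm{conv}(J)$ is simplicial of the right dimension to be a maximal cone of a triangulation of $Q$ after adjoining the circuit directions, so $\{v_i : i \in J\}$ is a basis of $\ZZ J \otimes \QQ$ and, combined with a basis of $\ZZ I$, spans a finite-index sublattice of $N$. The normalized volume of $\pi(\sigma_J)$ in $N_\RR/\RR I$ times $i$-type torsion factor of $N/\ZZ I$ is then a standard lattice computation giving $[N : \ZZ(I \cup J)]$; this is the multiplicativity of indices $[N : \ZZ(I\cup J)] = [N/\ZZ I : \text{image of } \ZZ J] \cdot [\text{torsion of } N/\ZZ I]$ arranged so that the DKK volume count on $\Sigma_F$ sees exactly the right-hand side. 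I would phrase this so that the torsion of $N/\ZZ I$ is absorbed uniformly (it does not depend on $J$) and the $J$-dependent part is the covolume of the image lattice of $\ZZ J$.

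Finally I would sum over all separating sets $J$: since the maximal cones of $\Sigma_F$ are exactly the $\pi(\sigma_J)$ and they meet only along lower-dimensional faces (this is guaranteed because $\Fa(\Ta,\Ta')$ in Proposition \ref{prop:subd} is a genuine polyhedral subdivision, and $\Sigma_F$ is its "star" image under $\pi$), additivity of $\mathrm{rk}\, K_0$ over the cones of a simplicial fan yields
\[
\rk\big(K_0(Z_F)\big) = \sum_{J\text{-sep}} [N : \ZZ(I \cup J)],
\]
as claimed. The main obstacle I anticipate is the second step: one must check carefully that after passing to the quotient $N_\RR/\RR I$ the cones $\pi(\sigma_J)$ genuinely remain simplicial and do not overlap in top dimension, and that the induced volume form interacts with the torsion of $N/\ZZ I$ so as to produce the full index $[N : \ZZ(I \cup J)]$ rather than just the $\RR I$-transverse part — in other words, that no index contribution is lost or double-counted when restricting to $\RR I$ versus $\RR\Gamma$-type quotients. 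This is exactly the kind of subtlety handled in Proposition \ref{prop:kgamma}, so I would organize the proof to invoke that proposition's torsion analysis verbatim, applied with $\ZZ I$ in place of $\ZZ(A\cap\Gamma)$, and then only the volume-sum identity remains to be checked.
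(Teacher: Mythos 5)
Your proposal is correct and follows essentially the same route as the paper: the paper also splits the index $[N:\ZZ(I\cup J)]$ into a $J$-independent torsion factor coming from $\ZZ I$ and a volume factor equal to the normalized volume of the cone over the image of $J$ in $N_\RR/\RR I$, and then invokes the volume-times-torsion count from Proposition \ref{prop:kgamma}. The only cosmetic difference is that the paper phrases the index identity as a non-canonical direct-sum splitting of the finite group $N/\ZZ(I\cup J)$ rather than as multiplicativity of indices.
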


\begin{proof}
Since $I \cup J$ is a simplex, and 
$\ZZ I \cap \ZZ J= (0),$ we can write a non-canonical
splitting for the torsion group 
$N/ \ZZ(I \cup J)$ as a direct sum of finite torsion groups
$\ZZ^k/ \ZZ I \oplus \ZZ^{d-k}/\ZZ J.$ Note that 
the index $[\ZZ^{d-k}/\ZZ J]$ is the volume of the 
cone generated by the separating set $J$ in the 
fan $\Sigma_I$ in $N_\RR / \RR I$ associated to the 
toric DM stack $Z_F.$ As in the proof of Proposition \ref{prop:kgamma},
we conclude that the rank of the Grothendieck group 
of the toric DM stack is indeed the sum $\sum_J [N: \ZZ (I \cup J)]$
over all the separating sets $J.$
\end{proof}

The main result of this note is the following theorem. 

\begin{theorem} \label{thmm}
For any edge $F=[\phi_{\Ta}, \phi_{\Ta^\prime}]$ of the secondary polytope, 
between the triangulations $\Ta$ and $\Ta^\prime.$ 
the following equality holds:
\begin{equation}\label{formula:k0}
\rk (K_0 (D^b(Z_F))) = \sum_{\Gamma \subset Q} n_{\Gamma,F} \cdot \rk (K_0(D_\Gamma))), 
\end{equation}
for some non-negative integers $n_{\Gamma,F},$ and  
the summation is taken over all the non-empty
faces $\Gamma$ of the polytope $Q.$ 
\end{theorem}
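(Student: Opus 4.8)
The plan is to show that both sides of (\ref{formula:k0}) equal the lattice length of the edge $F$ of the secondary polytope $S(A)$. For the left-hand side I would combine the toric $K$-theory computation of Proposition \ref{prop:ki} with an explicit formula for the edge vector $\phi_\Ta-\phi_{\Ta^\prime}$; for the right-hand side I would use that $S(A)$ is the Newton polytope of the principal $A$-determinant $E_A$ together with the product decomposition of $E_A$ in Definition \ref{def:ea}.

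First I would handle the left-hand side. Let $I$ be the circuit attached to $F$, with primitive relation $\sum_{i\in I}l_iv_i=0$ as in (\ref{circuit}), and recall from Proposition \ref{prop:subd} (and the preceding discussion) that the maximal simplices in which $\Ta$ and $\Ta^\prime$ differ are precisely $\conv((I\setminus i)\cup J)$ for $i\in I$ and $J\subset A\setminus I$ separating, those with $i\in I_+$ belonging to $\Ta$ and those with $i\in I_-$ belonging to $\Ta^\prime$. For a separating set $J$, the $d+1$ vectors $\{v_j: j\in I\cup J\}$ span $N_\RR$, so their lattice of integral relations is generated by $(l_i)_{i\in I}$ extended by zero; hence $[\ZZ(I\cup J):\ZZ((I\setminus i)\cup J)]=|l_i|$, and, since $A$ lies on an affine hyperplane at height one, the normalized volume of a maximal lattice simplex equals the index of the sublattice generated by its vertices, so $\vol(\conv((I\setminus i)\cup J))=[N:\ZZ((I\setminus i)\cup J)]=|l_i|\cdot[N:\ZZ(I\cup J)]$. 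Substituting this into $\phi_\Ta(v)-\phi_{\Ta^\prime}(v)=\sum_{v\in{\rm Vert}(\sigma),\,\sigma\in\Ta}\vol(\sigma)-\sum_{v\in{\rm Vert}(\sigma),\,\sigma\in\Ta^\prime}\vol(\sigma)$ and using $\sum_{i\in I_+}|l_i|=\sum_{i\in I_-}|l_i|$, the terms attached to points $v_k$ with $k\notin I$ cancel (both triangulations of each $\conv(I\cup J)$ having the same total volume) and a direct telescoping over $I_\pm$ and over the separating sets gives $\phi_\Ta-\phi_{\Ta^\prime}=-\left(\sum_J[N:\ZZ(I\cup J)]\right)(l_i)_{i\in I}$, extended by zero, the sum over separating $J$. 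Since $(l_i)_{i\in I}$ is a primitive integer vector, the lattice length of $F$ equals $\sum_J[N:\ZZ(I\cup J)]$, which by Proposition \ref{prop:ki} is $\rk(K_0(D^b(Z_F)))$.

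Next I would handle the right-hand side. By \cite[Theorem 10.1.4]{GKZbook}, $S(A)$ is the Newton polytope of $E_A$, so $F$ is an edge of ${\rm Newt}(E_A)$; and by Definition \ref{def:ea}, ${\rm Newt}(E_A)=\sum_\Gamma u(\Gamma)\,i(\Gamma)\cdot{\rm Newt}(\Delta_{A\cap\Gamma})$ as a Minkowski sum over all non-empty faces $\Gamma$ of $Q$. A face of a Minkowski sum is the Minkowski sum of the corresponding faces of the summands, and since $F$ is one-dimensional each summand contributes either a vertex or an edge parallel to $F$, with lattice lengths adding up. Taking $n_{\Gamma,F}$ to be the lattice length of the face of ${\rm Newt}(\Delta_{A\cap\Gamma})$ in the direction of $F$ — a non-negative integer, which one checks agrees with the multiplicity of Definition \ref{def:n} — we get that the lattice length of $F$ equals $\sum_\Gamma n_{\Gamma,F}\cdot u(\Gamma)\,i(\Gamma)$. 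Finally, $D_\Gamma=D^b(X_{\SSigma_\Gamma})$ and Proposition \ref{prop:kgamma} give $\rk(K_0(D_\Gamma))=u(\Gamma)\,i(\Gamma)$, so the right-hand side of (\ref{formula:k0}) is also the lattice length of $F$, and the theorem follows.

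The main obstacle, I expect, is the middle step: pinning down $\phi_\Ta-\phi_{\Ta^\prime}$ exactly. One has to verify that, after inserting the volume--index identities, the many terms indexed by $i\in I_\pm$ and by the separating sets collapse to precisely $\sum_J[N:\ZZ(I\cup J)]$ times the primitive circuit vector, with no leftover torsion or index factors of the kind that Proposition \ref{prop:ki} keeps implicit, and that the points outside $I$ contribute nothing. This bookkeeping is largely implicit in Chapter 7 of \cite{GKZbook}, so one could instead quote the GKZ description of the edges of the secondary polytope; carrying out the computation directly, however, keeps the argument self-contained and makes the appearance of the indices $[N:\ZZ(I\cup J)]$ transparent.
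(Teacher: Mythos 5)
Your proof is correct, and it establishes the key combinatorial identity $\sum_{J}[N:\ZZ(I\cup J)]=\sum_\Gamma n_{\Gamma,F}\,u(\Gamma)\,i(\Gamma)$ by a genuinely different route on the left-hand side. The paper works with the polynomial $E_A$ itself: it invokes \cite[Theorem 10.1.12']{GKZbook} to factor the coefficient restriction of $E_A$ to the edge $F$ according to the polyhedral subdivision $\Fa(\Ta,\Ta^\prime)$, identifies each factor $E_{I\cup J}$ with the circuit discriminant $\Delta_I$ via \cite[Prop.~9.1.8]{GKZbook}, and reads off the exponent $\sum_J[N:\ZZ(I\cup J)]$ of $\Delta_I$; the right-hand side is then handled exactly as you do, via the product formula of Definition \ref{def:ea} and the Minkowski-sum behaviour of Newton polytopes. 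You replace the appeal to Theorem 10.1.12' by a direct computation of the edge vector $\phi_\Ta-\phi_{\Ta^\prime}$, and your bookkeeping checks out: the contributions of points outside $I$ cancel because the joins of $\Ta_+(I)$ and $\Ta_-(I)$ with $J$ triangulate the same polytope $\conv(I\cup J)$, and $\vol(\conv((I\setminus i)\cup J))=|l_i|\cdot[N:\ZZ(I\cup J)]$ because the relation lattice of $I\cup J$ is generated by the primitive circuit relation. Your route is more elementary and self-contained and makes $\sum_J[N:\ZZ(I\cup J)]$ visibly equal to the lattice length of $F$; the paper's route buys the stronger polynomial statement that the coefficient restriction of $E_A$ to $F$ is literally a power of $\Delta_I$ times a monomial, which is what Definition \ref{def:n} rests on. One caveat: for the theorem as stated ("some non-negative integers") your lattice-length definition of $n_{\Gamma,F}$ suffices, but its asserted agreement with Definition \ref{def:n} requires the additional fact that the coefficient restriction of $\Delta_{A\cap\Gamma}$ to the face exposed by $\psi$ is a pure power of $\Delta_I$ (so that its exponent equals the lattice length of its Newton segment, the Newton polytope of $\Delta_I$ being a primitive segment); this is precisely the step the paper also asserts without detailed proof, so you should either quote it or flag it explicitly.
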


The precise combinatorial definition of the non-negative 
integers $n_{\Gamma,F}$ 
will be given below in Definition \ref{def:n}. It
arises naturally as a by-product of the proof of the theorem
and it is close in spirit to the string theoretical analysis in 
\cite{AGM}. Geometrically,
the index $n_{\Gamma,F}$ coincides in most cases with 
the intersection 
multiplicity of the discriminant component $\nabla_{\Gamma,F}$
with the rational curve induced by edge $F$ in the 
DM toric stack defined by the secondary fan. This point of view 
has been discussed in \cite{HLS}, \cite{KS}.

\begin{proof}
We will prove the result by 
giving two interpretations to the coefficient 
restriction 
of the principal $A$-determinant $E_A$
to the edge $F$ of the secondary polytope. 
By \cite[Prop 6.1.3]{GKZbook},
the coefficient 
restriction of any Laurent polynomial
$P(x_1, \ldots, x_n)$ to any face $F$
of its Newton polytope $S(A)$
has the leading term of 
the polynomial in $t$ 
$$t \mapsto P(t^{\psi_1} x_1, 
\ldots , t^{\psi_n} x_n)$$
equal to 
$t^{\psi(F)} P_F (x_1, \ldots, x_n),$
where $\psi$ is a linear support function for the 
face $F,$ and $P_F$ is the coefficient 
restriction of $P$ to the face $F.$ Recall 
that $\psi : \RR^A \to \RR$ is a linear support function for the 
face $F$ if $F$ is the maximal face the secondary polytope 
$S(A)$ where $\psi$ attains its maximum value. 

Let $\Pa=(Q_i,A_i)$ be a polyhedral subdivision of the marked polytope $(Q,A).$
Theorem 10.1.12' in \cite{GKZbook} shows that, up to multiplication by a constant,
the coefficient
restriction of $E_A$ to the edge $F(\Pa)$ of the secondary polytope equals
$$
\prod_i (E_{A_i})^{[N: \ZZ A_i]} \ .
$$
In particular, if $\Fa$ is the polyhedral subdivision corresponding to the 
circuit $I$ and the edge $[\phi_\Ta, \phi_{\Ta^\prime}]$ for 
two triangulations $\Ta,$ $\Ta^\prime$
(see Proposition \ref{prop:subd}), then, up to multiplication by a 
constant, $E_A$ is equal to
$$
\prod_{i \in K} (E_{A_i})^{[N: \ZZ A_i]} \ 
\cdot
\prod_{J-{\rm sep}} (E_{I \cup J})^{[N: \ZZ (I \cup J)]} \ ,
$$
where $({\rm conv}(A_i), A_i)_{i \in K}$ is the family of simplices
that $\Ta$ and $\Ta^\prime$ have in common, 
and the second product is taken over all the separating sets $J$ 
such that $J \cup I$ has maximal dimension. Note that 
each such $J \cup I$ contains as a unique circuit the circuit $I.$ 
In the terminology of \cite[pg 309]{GKZbook}, the set
$J \cup I$ is {\it weakly dependent}. As such, the argument 
on \cite[pg 309]{GKZbook} shows that, up to a multiplication 
by a constant, $E_{I \cup J}$ is in fact equal to the 
the discriminant $\Delta_{I \cup J},$ which by \cite[Prop 9.1.8]{GKZbook}
is a non-zero scalar multiple of the polynomial
\begin{equation}\label{di}
\Delta_I:=(\prod_{i \in I_+} l_i^{l_i}) \prod_{i \in I_-} a_i^{-l_i}
-
(\prod_{i \in I_-} l_i^{-l_i}) \prod_{i \in I_+} a_i^{l_i},
\end{equation}
where $\sum_{i \in I_+ \cup I_-} l_i v_i=0$ 
is a primitive integer relation (unique up to sign) 
associated to the circuit $I.$ Note \cite[Prop 9.1.8]{GKZbook}
assumes that the circuit generates the ambient lattice so 
the choice
$
|l_i|:= {\rm vol} ({\rm conv} (v_i, i \in I \setminus {i})),
$
would work in that case. We do not make that assumption. 

\medskip

Consider a linear form $\psi :\RR^A \to \RR$
lying in the 
interior of cone $C(\Fa)$ (the normal cone
to the edge $F$) of the secondary fan as
defined in section \ref{section:A}.
This means that the edge $F$ is the 
supporting face of $\psi.$ In particular,
$\psi$ is equal to a constant $\psi(F)$ along $F,$ 
and 
$\psi(v_i) < \psi(F)$ when $v_i$ is not in $F.$
It follows that the coefficient of
the leading term in $t$ of the Laurent polynomial 
$t \mapsto E_A (\sum_i t^{\psi_i} a_i x^{v_i})$ is, up to
multiplication by a constant and a monomial in the variables $a_i,$ the product
\begin{equation}\label{dom}
\prod_{J-{\rm sep}} (E_{I \cup J})^{[N: \ZZ (I \cup J)]} 
=(\Delta_I)^{\sum_J [N: \ZZ (I \cup J)]}
\end{equation}
where the sum and product in the formula above are over all the 
separating sets $J,$ a the discriminant polynomial 
$\Delta_I$ is defined by (\ref{di}).

\medskip

On the other hand, according to Definition \ref{def:ea}
the principal $A$--determinant $E_A$ is the product 
$$
E_A:= \prod_{\Gamma} (\Delta_{A \cap \Gamma})^{u(\Gamma) \cdot i(\Gamma)}, 
$$
where the product is taken over all the non-empty faces of the
polytope $Q={\rm conv} (A).$ Due to the properties of Minkovski sums for Newton polytopes,
the linear form $\psi$ attains its maximum value 
on the Newton polytope of $\Delta_{A \cap \Gamma}$ at either a vertex
or an edge parallel to $F.$
This means that, up to
multiplication by a constant and a monomial in in the variables $a_i$, 
the product
the coefficient of the
leading term in $t$ of the Laurent polynomial 
$t \mapsto \Delta_{A \cap \Gamma} (\sum_i t^{\psi_i} a_i x^{v_i})$ is a power of
$ 
\Delta_I.
$

\begin{definition}\label{def:n}
The multiplicity $n_{\Gamma,F}$ is 
the non-negative integer defined by the property that, up to
multiplication by a constant and a monomial in the variables $a_i$, 
the coefficient of the
leading term in $t$ of the Laurent polynomial 
$t \mapsto \Delta_{A \cap \Gamma} (\sum_i t^{\psi_i} a_i x^{v_i})$ is equal
to $(\Delta_I)^{n_{\Gamma,F}}.$
\end{definition}
The theorem follows as a direct consequence \label{eq:n}
of the 
results of Propositions \ref{prop:kgamma} and \ref{prop:ki}.
\end{proof}

\begin{remark}
Note that $n_{\Gamma,F}=0,$
when $\Gamma$ is a vertex of $Q.$ 
Furthermore, the definition shows that, if the circuit $I$ determined by the  
edge $F$ is not contained in the face $\Gamma,$ then $n_{\Gamma,F}=0.$
\end{remark}

\begin{remark}
In practice, calculating the multiplicities 
$n_{\Gamma,F}$ can be computationally daunting.
We will provide some examples in the next section. 
On the other hand, this definition avoids the well known 
difficulties related to
the local topological picture of intersecting discriminants.
Moreover, our definition does not use the
Horn uniformization (see Kapranov \cite{KapHorn})
of the $A$--discriminant
which is a useful tool but only provides a birational parametrization
of the discriminant.
\end{remark}

The previous theorem adds to the body of evidence 
supporting the conjecture of 
Aspinwall-Plesser-Wang \cite{APW}. The importance 
of multiplicities and other aspects 
of this conjecture have been discussed by
Kite--Segal \cite{KS}. 

\begin{conjecture}\label{conj:apw}

\

$1)$ For any edge $F$ of the secondary polytope, the category $D^b(Z_F)$
admits a semiorthogonal decompositon consisting of 
$n_{\Gamma,F}$ components $D^b (X_{\SSigma_\Gamma})$ for each
non-empty face $\Gamma$
of the polytope $Q={\rm conv}(A).$ 

$2)$ For any non-empty face $\Gamma$ of the polytope $Q,$
there exists a spherical functor
$D^b(D_\Gamma) \to D^b (X)$ for any toric DM stack $X$ 
determined by a triangulation corresponding to a vertex of the secondary 
polytope. 
\end{conjecture}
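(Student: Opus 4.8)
Since the rank identity of Theorem \ref{thmm} is precisely the numerical shadow of the decomposition predicted in part $1)$, and since part $2)$ follows from part $1)$ via the ``wall monodromy'' spherical functor $D^b(Z_F)\to D^b(X)$ of Proposition \ref{prop:ez} together with the Kuznetsov and Halpern-Leistner--Shipman criterion recalled above, the plan is to concentrate on constructing, for each edge $F$ with associated circuit $I$, an explicit semiorthogonal decomposition of $D^b(Z_F)$ whose blocks are $n_{\Gamma,F}$ twisted copies of $D^b(X_{\SSigma_\Gamma})$, one family for each non-empty face $\Gamma\subset Q$. The first step is to read off from Definition \ref{def:fanz} that $\SSigma_F$ is a toric \emph{fibration}: quotienting $N$ by $\ZZ I$ collapses the circuit direction, so $Z_F$ maps to the toric DM stack cut out in $N_\RR/\RR I$ by the separating sets $J$, and the fibers are modeled on the ``circuit stack'' built from the two triangulations $\Ta_{\pm}(I)$ of $\mathrm{conv}(v_i,\,i\in I)$ --- a weighted-projective-type stack (a root stack over a weighted projective space with weights the $|l_i|$) whose derived category carries a Beilinson/Orlov exceptional collection. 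Pushing an exceptional resolution of the relative diagonal forward along this fibration should produce a first-layer semiorthogonal decomposition of $D^b(Z_F)$ indexed by the toric strata of the base, i.e.\ by the faces $\Gamma$ of $Q$ that contain the circuit $I$ (the other faces contributing $n_{\Gamma,F}=0$, consistent with the remark following Definition \ref{def:n}).

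The second step is to identify, for each such $\Gamma$, the corresponding block with a twist of $D^b(X_{\SSigma_\Gamma})$. Here Definition \ref{def:fangamma} does the work: the closed stratum of the base associated to $\Gamma$ is obtained by passing to $N/\ZZ(A\cap\Gamma)$ and taking a simplicial fan on the images of the $v_i$, $i\in\Ia$, which is exactly a stacky fan of the type $\SSigma_\Gamma$; by the birational-flop equivalence established earlier in this section its derived category is $D^b(X_{\SSigma_\Gamma})$ independently of the choice of fan, so the block is well defined. The multiplicity $n_{\Gamma,F}$ then enters as the number of distinct line-bundle twists of $D^b(X_{\SSigma_\Gamma})$ --- shifts along the collapsed circuit direction --- that fit as consecutive admissible subcategories over the $\Gamma$-stratum, exactly as the $j$-fold twists of $\Oa$ fill $D^b(\PP^{j})$. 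The matching of this categorical count with the exponent in Definition \ref{def:n} should be forced by the coefficient-restriction identity $\Delta_{A\cap\Gamma}(\sum_i t^{\psi_i}a_i x^{v_i})\sim(\Delta_I)^{n_{\Gamma,F}}$ used in the proof of Theorem \ref{thmm}: the Newton polytope of $\Delta_{A\cap\Gamma}$ is the principal component of the secondary polytope of the lower-dimensional configuration $A\cap\Gamma$, and its extent in the $\psi$-direction is precisely the length of the Orlov block sitting over the $\Gamma$-stratum. The total rank then matches Theorem \ref{thmm} term by term via Propositions \ref{prop:kgamma} and \ref{prop:ki}.

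The hard part will be two intertwined issues. First, for faces with $n_{\Gamma,F}>1$ one must realize $D^b(X_{\SSigma_\Gamma})$ as $n_{\Gamma,F}$ \emph{genuinely distinct} admissible subcategories, each the image of the previous one under a fixed autoequivalence (a twist by a line bundle pulled back from the circuit direction), and then prove that precisely $n_{\Gamma,F}$ consecutive twists are semiorthogonal --- no more, no fewer. This is exactly the point where the local topology of intersecting discriminant components, which our purely algebraic Definition \ref{def:n} was designed to avoid, re-enters in categorical form; controlling it will require the grade-restriction (``window'') machinery of Ballard--Favero--Katzarkov \cite{BFK} and Halpern-Leistner \cite{HL}, in the stacky form of Halpern-Leistner--Shipman \cite{HLS}, rather than bare combinatorics. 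Second, the lattice indices $i(\Gamma)$ and $[N:\ZZ(I\cup J)]$ mean that the strata and fibers carry $\mu$-gerbes, so every identification of blocks and every exceptional collection must be chosen equivariantly for the relevant finite stabilizer groups, and the induction on $\dim Q$ that organizes the nested faces $\Gamma$ must be carried through at the level of these root stacks. I expect a complete argument to combine the windows framework with the circuit combinatorics of Proposition \ref{prop:subd} and an EZ-type relative spherical construction in the spirit of \cite{EZ}, applied inductively along the face lattice of $Q$.
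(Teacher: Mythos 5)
The statement you are addressing is stated in the paper as a conjecture, and the paper does not prove it: the only thing established there is the $K$--theoretic shadow (Theorem \ref{thmm}, via Propositions \ref{prop:kgamma} and \ref{prop:ki}) together with the observation that, \emph{when} $n_{\Gamma,F}>0$, part $2)$ follows from part $1)$ by applying the Kuznetsov/Halpern-Leistner--Shipman result \cite{HLS} to the wall-monodromy spherical functor of Proposition \ref{prop:ez}. Your proposal reproduces exactly this reduction of $2)$ to $1)$, but for $1)$ itself it is a research program, not a proof: the decisive steps are explicitly deferred (``the hard part will be\dots'', ``I expect a complete argument to combine\dots''). In particular, the central claim --- that for each face $\Gamma$ one can exhibit exactly $n_{\Gamma,F}$ mutually semiorthogonal admissible copies of $D^b(X_{\SSigma_\Gamma})$ inside $D^b(Z_F)$, no more and no fewer, with the count forced by the coefficient-restriction identity of Definition \ref{def:n} --- is precisely the content of the conjecture, and nothing in your sketch supplies it. A rank equality cannot detect semiorthogonality, so Theorem \ref{thmm} being the ``numerical shadow'' of the decomposition gives no leverage on the categorical statement; this is exactly why the paper stops at $K$--theory.

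There is also a geometric misreading in your first step. By Definition \ref{def:fanz} the stacky fan $\SSigma_F$ already lives in $N_\RR/\RR I$: the circuit direction has been collapsed in the very definition of $Z_F$, so $Z_F$ is not fibered over a ``base cut out by the separating sets'' with fibers modeled on a circuit stack --- it \emph{is} that base. The circuit-stack/weighted-projective fibration you want to push a Beilinson resolution along lives inside the ambient resolutions $X_\pm$ (this is the EZ picture of \cite{EZ} and the window picture of \cite{HLS}), not inside $Z_F$, so the proposed ``first-layer'' decomposition of $D^b(Z_F)$ indexed by faces $\Gamma\supset I$ does not come from the mechanism you describe and would have to be constructed by other means (plausibly by stratifying $Z_F$ by the toric strata corresponding to such $\Gamma$, but that is again the open problem). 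Finally, for part $2)$ you would also need, for each non-empty face $\Gamma$, the existence of at least one edge $F$ with $n_{\Gamma,F}>0$; neither the paper nor your proposal addresses this, and without it the reduction of $2)$ to $1)$ is incomplete. None of this makes your outline worthless --- the suggested use of window techniques \cite{BFK}, \cite{HL}, \cite{HLS} and an induction on the face lattice is a reasonable way to attack the conjecture, and is consistent with the discussion in \cite{KS} --- but as it stands it is a strategy with the key lemmas missing, not a proof.
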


\begin{remark}\label{rm1}
One of the hypotheses in the definition of an $A$-set is that 
the set $A$ generates the lattice $N=\ZZ^d.$ 
This hypothesis guarantees that the multiplicity of the ''primary"
component of the discriminant associated to the face 
$\Gamma=Q$ has $u(\Gamma)= i(\Gamma)=1$ and 
$Z_\Gamma$ is a point ${\rm Spec} \, \CC.$
This assumption is likely not essential. 
We could simply ask 
that $A$ linearly
generates the vector space $N_\RR=\RR^d.$ The adapted results of this note 
should continue to hold, although the multiplicities involved gain the 
extra factor $[N: \ZZ A].$ On the other hand, the technical details 
in \cite{GKZbook} are obtained for $A$-sets in this more restricted context, 
so one would have to work out and adapt the calculations to cover the more 
general situation. 
\end{remark}

\section{Applications}
\label{section:examples}

\example\label{ex1}
Recall that for a general set $A$ in the lattice $N=\ZZ^d,$  
the rational polyhedral cone $K$ generated by $A$ 
defines a Gorenstein affine toric variety
$Y={\rm Spec}\,\CC[K^\dual\cap N^\dual]$. 
For any stacky resolution 
$\pi:X_\SSigma\to Y$ induced by the a regular 
trangulation of the polytope $Q,$
the structure of the exceptional set is 
quite complicated. It may contain irreducible components
of different dimensions. Consider an edge 
of the secondary polytope and 
let $I=I_+ \cup I_-$ 
be the associated circuit in $A$. We assume 
that $I$ is maximal dimension, $\dim I = \dim C.$
Let $X_+$ and $X_-$ denote the cooresponding toric
stacky resolutions of $Y$ that differ by a flop.
In this case $Z_F$ is the stacky point $[\rm{Spec} \, \CC/\ZZ_n]$
where $n$ is the index of the sublattice generated 
by the circuit $I$ in $N, n=[N: \ZZ I].$
The sets $I_+$ and $I_-$ generate cones that determine 
the exceptional loci $E_+ \subset X_+,$ and $E_- \subset X_-,$
and, by Proposition \ref{prop:ez}, we have spherical
functors $D^b({\rm Spec} \, \CC/\ZZ_n]) \to D^b(E_\pm) \hookrightarrow D^b(X_\pm).$
The category $D^b([\rm{Spec} \, \CC/\ZZ_n])$ splits into $n$
orthogonal copies of $D^b (\rm{Spec} \, \CC),$ so we get the spherical
functors predicted by Kontsevich's conjecture described in 
the introduction. Theorem \ref{thmm} implies that
$n_{F,Q}=n,$ where $F$ is the edge of the secondary 
polytope associated to the flop. This multiplicity 
statement is not obvious if one attempts a direct calculation. 
In the transversal case $n_{F,Q}=1,$ the
analytic
monodromy calculations from \cite{hor}, \cite{BH2} 
for the classical GKZ $D$--module are valid, so we get that: 

\begin{corollary}\label{cor:tr}
The wall monodromy spherical functor induced 
by the edge $F$ is the spherical functor 
$D^b {(\rm Spec} \, \CC) \to D^b(E_F) \to D^b(X),$ 
where $E_F \subset X$  is the exceptional locus 
corresponding to the circuit $I.$ The $K$--theory 
action of this functor matches the analytical continuation
functor along a corresponding loop $\gamma_F$
(see \cite{BH2}).
\end{corollary}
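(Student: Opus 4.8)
\textbf{Proof plan for Corollary \ref{cor:tr}.}

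The plan is to combine the categorical identification from Example \ref{ex1} with the existing analytic monodromy computations, using the transversality hypothesis $n_{F,Q}=1$ to eliminate any ambiguity. First I would invoke Example \ref{ex1}: in the transversal situation the only non-vertex face $\Gamma$ of $Q$ contributing to the edge $F$ is $\Gamma = Q$ itself, with $n_{Q,F}=1$, and $Z_F = [\mathrm{Spec}\,\CC/\ZZ_n]$ with $n = [N:\ZZ I]$. Transversality forces $n=1$, so $Z_F = \mathrm{Spec}\,\CC$ and the wall monodromy functor of Proposition \ref{prop:ez} is literally $D^b(\mathrm{Spec}\,\CC)\to D^b(X)$. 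By the construction of $\SSigma_F$ (Definition \ref{def:fanz}) its image is supported on the union of cones over the separating sets $J$, which — since $I$ has maximal dimension — are exactly the cones meeting the flopping locus; composing the embedding $D^b(Z_F)\to D^b(E_F)$ coming from the closed substack structure (using the substack-to-stack pushforward discussed after the definition of $X_\sigma$ in Section \ref{sec1}) with $D^b(E_F)\hookrightarrow D^b(X)$ recovers the wall monodromy functor. This establishes the first sentence: the functor factors as $D^b(\mathrm{Spec}\,\CC)\to D^b(E_F)\to D^b(X)$, and its spherical object is (up to shift) the pushforward of $\Oa_{E_F}$, i.e. the spherical object governing the flop.

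Next I would match the $K$-theory action with analytic continuation. The induced spherical twist $T_F$ acts on $K_0(D^b(X))$, which under the toric–$D$-module dictionary is identified with the space of solutions of the (better-behaved) GKZ system \cite{BH} near a large-radius point of the secondary-fan moduli space. The edge $F$ of the secondary polytope corresponds dually to a wall of the secondary fan, hence to a loop $\gamma_F$ encircling the discriminant component $\nabla_{A\cap Q}$ at a generic (transversal) intersection point of that component with the rational curve cut out by $F$ in the toric stack of the secondary fan. The monodromy of the GKZ system around such a transversal crossing was computed in \cite{hor} and \cite{BH2} precisely as a reflection/spherical-twist type operator attached to the circuit relation \eqref{circuit}; I would quote that computation and observe that the resulting matrix on the solution space coincides, under the mirror identification, with $[T_F]$ acting on $K_0(D^b(X))$. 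The transversality hypothesis $n_{F,Q}=1$ is exactly what guarantees that the local monodromy is a single such twist with no higher multiplicity or nilpotent correction, so that the comparison is an equality of operators and not merely of conjugacy classes.

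Finally I would package the two halves. The equality of $K$-theory classes $[T_F] = [\text{monodromy of }\gamma_F]$, together with the identification of $T_F$ as the twist along the spherical functor $D^b(\mathrm{Spec}\,\CC)\to D^b(E_F)\to D^b(X)$, yields the statement. \emph{The main obstacle} I anticipate is making the mirror identification between $K_0(D^b(X))$ and the GKZ solution space sufficiently precise in the stacky setting: \cite{hor} and \cite{BH2} work with the classical GKZ $D$-module and a specific topological mirror map, whereas the toric DM stack genuinely requires the better-behaved system of \cite{BH}; I would either restrict to the substack-free (orbifold-trivial) case where the two systems agree, or indicate how the reconstruction of \cite{BH2} extends once the correct integral structure on the solution sheaf is fixed, and flag the general stacky comparison as the content deferred to future work in the introduction.
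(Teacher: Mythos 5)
Your plan follows essentially the same route as the paper: Corollary \ref{cor:tr} is obtained there exactly by combining the identification in Example \ref{ex1} (maximal-dimensional circuit, $Z_F=[{\rm Spec}\,\CC/\ZZ_n]$ with $n=[N:\ZZ I]=n_{F,Q}$, and the factorization through $D^b(E_F)$ from Proposition \ref{prop:ez}) with the citation of the analytic monodromy computations of \cite{hor}, \cite{BH2}, which apply precisely because transversality $n_{F,Q}=1$ puts one in the classical GKZ setting. Your flagged obstacle about the stacky/better-behaved GKZ comparison is the same caveat the paper itself defers to future work, so no gap beyond what the paper already acknowledges.
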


A similar argument expands the range of cases 
where Kontsevich's conjecture is true beyond the 
the cases considered in  \cite{hor}. We hope to return 
to this issue in future work. 

\

\example \label{exa2}
Let $X$ be the resolution of the $A_3$ singularity, 
with $v_0=(1,0), v_1=(1,1), v_2=(1,2), v_3=(1,3), 
v_4=(1,4).$ As it is well known, the secondary polytope
is combinatorially equivalent to a cube in $\RR^3.$
The principal $A$--determinant is 
\begin{equation*} 
\begin{split}
E_A&= a_0 a_4 \ \Delta_Q= a_0 a_4
 (256 a_0^3 a_4^3-192 a_0^2 a_1 a_3 a_4^2-128 a_0^2 a_2^2 a_4^2\\&+144 a_0^2 a_2 a_3^2 a_4 
-27 a_0^2 a_4^4
+144 a_0 a_1^2 a_2 a_4^2-6 a_0 a_1^2 a_3^2 a_4-80 a_0 a_1 a_2^2 a_3 a_4\\
&+18 a_0 a_1 a_2 a_3^3
+16 a_0 a_2^4 a_4-4 a_0 a_2^3 a_3^2-27 a_1^4 a_4^2\\&+18 a_1^3 a_2 a_3 a_4 
-4 a_1^3 a_3^3-4 a_1^2 a_2^3 a_4+ a_1^2 a_2^2 a_3^2).
\end{split}
\end{equation*}

\medskip

It is clear that in this case $Z_Q$ is the point
${\rm Spec} \, \CC.$ Let $X$ denote the DM stack $[\CC^2/\ZZ_4]$ whose
stacky fan has one cone and the rays generated by the 
vectors $v_0, v_4.$ Consider the edge $F_1$ corresponding to the 
birational transformation $X \leftrightarrow X_1,$ where $X_1$ is the 
toric DM stack with cones determined by the pairs $v_0,v_1$ and $v_1,v_4.$
The associated polyhedral subdivision is $({\rm conv} \{0,4\}, \{ 0,1,4\}),$
and the circuit relation $I$ is $3v_0 - 4 v_1 + v_4=0.$ The discriminant
$\Delta_I$ is $256a_0^3 a_4 - 27a_1^4,$ and the leading term with respect 
to the edge $F_1$ in the quartic discriminant $\Delta_Q$ is 
$$
256 a_0^3 a_4^3 - 27 a_1^4 a_4^2= a_4^2 \cdot \Delta_I.$$
This means that $n_{Q,F_1}=1,$ which is consistent with the fact
that $Z_{F_1}= {\rm Spec} \, \CC.$ The associated spherical functor is 
$$D^b({\rm Spec} \, \CC) \to D^b ([\CC^2/\ZZ_4]).$$

\medskip

Let $F_2$ denote the edge corresponding to the birational transformation
$X \leftrightarrow X_2,$ where $X_2$ is the 
toric DM stack with cones determined by the pairs $v_0,v_2$ and $v_2,v_4.$
The associated polyhedral subdivision is $({\rm conv} \{0,4\}, \{ 0,2,4\}),$
and the circuit relation $I$ is $v_0 - 2 v_2 + v_4=0.$ The discriminant
$\Delta_I$ is $4a_0 a_4 - a_2^2,$ and the leading term with respect 
to the edge $F_2$ in the quartic discriminant $\Delta_Q$ is 
$$
256a_0^3 a_4^3 
-128 a_0^2 a_2^2 a_4^2 + 16 a_0 a_2^4 a_4
= 16 a_0 a_4 \cdot \Delta_I^2.$$
This means that $n_{Q,F_2}=2$ which is consistent with the fact
that $Z_{F_2}$ is the stacky point $[{\rm Spec} \, \CC/\ZZ_2].$ The derived category 
$D^b([{\rm Spec} \, \CC/\ZZ_2])$ splits into two copies of 
$D^b({\rm Spec} \, \CC)$ and the associated 
wall monodromy spherical functor is 
$$D^b([{\rm Spec} \, \CC/\ZZ_2])
\to D^b ([\CC^2/\ZZ_4]).$$

\medskip

\example \label{ex3}
Let $X_1$ be the well known quasi--projective Calabi-Yau 
toric variety defined as the total space of the canonical bundle of $\PP^2.$
The toric structure is given by the vectors $v_0=(0,0,1), v_1=(1,0,1), 
v_2=(0,1,1)$ and $v_3=(-1,-1,1).$ The lattice of linear relations is one dimensional and 
generated by the vector $(-3,1,1,1),$ and the secondary polytope 
is the segment $F=[(3,2,2,2),(0,3,3,3)].$  
There are two toric birational models in this case, namely
$X_1$ and the stacky quotient $X_2 = [\CC^3/\ZZ^3].$ 
The principal $A$--determinant is $E_A= a_1^2 a_2^2 a_3^2 (a_0^3 -27 a_1 a_2 a_3).$
The multiplicity of the discriminant components corresponding 
to $v_1,v_2,v_3$ are all equal to $2,$ and $n_{F,Q}=1.$ The spherical
twist of $D^b(X_1)$ is determined by the spherical object $\Oa_{\PP2} \subset X_1.$

\example \label{ex4}
Let's consider the case the Calabi-Yau 
toric variety $X_1$ is the total space of the canonical bundle of the Hirzebruch surface $F_2.$ 
The toric structure is given by the vectors $v_0=(0,0,1), v_1=(1,0,1), 
v_2=(0,1,1), v_3=(-1,2,1)$ and $v_3=(0,-1,1)$ with the obvious cones.
The lattice of relations is two dimensional and 
generated by the vectors $(-2,0,1,0,1)$ and $(0,1,-2,1,0).$ There are four stacky 
toric birational models for $X_1.$ 

\medskip

\begin{figure}
\begin{center}
\includegraphics[height=6cm,width=8cm]{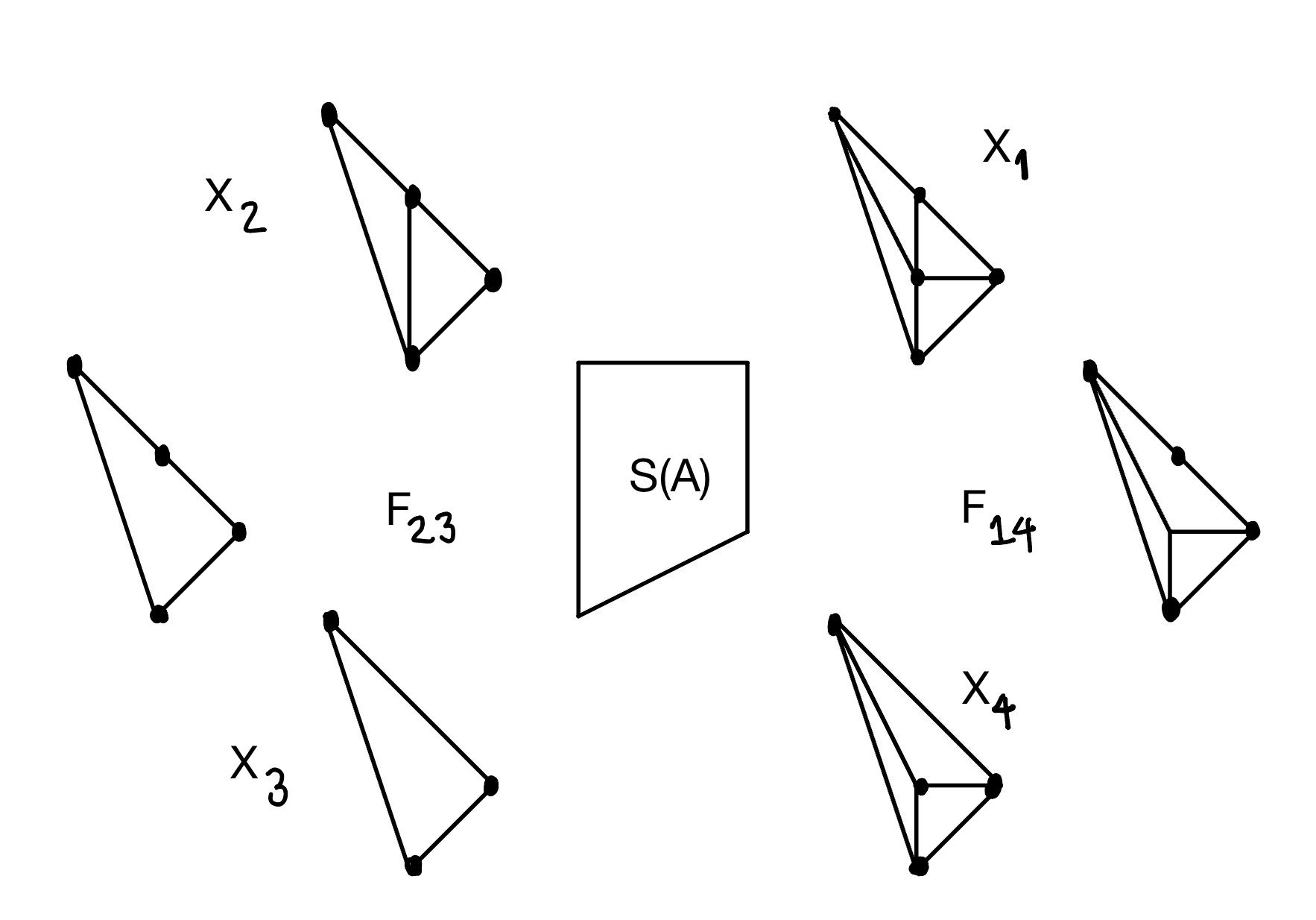}
\caption{The $F_2$ example.}
\label{fig:f2}
\end{center}
\end{figure}

The discriminant picture is obtained by studying the singularities 
of the Laurent polynomial $\sum a_i x^{v_i}.$ Besides the vertices
corresponding to $v_1, v_3$ and $v_4$ there are two other 
faces that contain circuits. The discriminant component 
determined by the face $\Gamma$ generated by the 
vectors $v_1, v_2$ and $v_3$ is 
$$
f_{\Gamma}= a_2^2 - 4 a_1 a_3.$$
For the maximal dimensional face $Q$ we obtain the discriminant
$$
f_{Q}= a_0^4 -8a_0^2 a_2 a_4 +16  a_2^2 a_4^2 -64 a_1 a_3 a_4^2.$$
The principal $A$-determinant is 
$$E_A=a_1^2 a_2^2 a_4^2 \cdot f_\Gamma \cdot f_Q.$$
If we choose the coordinates adapted to the large complex
structure point 
$$
x:= \frac{a_2 a_4}{a_0^2}, \quad y= \frac{a_1 a_3}{a_2^2}
$$
the we obtain that the two relevant discriminant components are given by 
$$y=\frac{1}{4} (1-\frac{1}{4x})^2 \quad \text{and} \quad 
y= \frac{1}{4}.$$
Figure 1 is suggestive and intuitively very useful and 
appeared for the first time in the paper by Dave Morrison \cite{DRM} on the $3$-fold octic
in $\PP^4(1,1,1,2,2)$ which is a higher dimensional generalization 
of our example.

\medskip

The analysis of the neighborhood of the point $A$ is completely analogous to the 
work \cite{AHK}, so we will not discuss here. 
By analzying the moduli space picture, 
we see that a rational curve $x={\rm constant}$ infinitesimally close to the 
$y$-axis corresponds to a transition from $X_1$,
the total space of the canonical bundle of $F_2$ 
to the stacky version $X_4$ of the weighted projective space 
$\PP(2,1,1).$ This curve intersects $E_A=0$ in the neighborhood 
of the point $B.$ 
The circuit $I$ associated to the 
corresponding edge of the secondary quadrilateral is 
$v_1 -2v_2 + v_3=0.$ One of the associated EZ-spherical functor 
is induced by the diagram 
\begin{equation*}
\begin{split}
\xymatrix{
E={\mathbb A}^1 \times {\mathbb P}^1 \; \ar@{^{(}->}[r] \ar[d]_q & X_1\\
Z={\mathbb A}^1&
}
\end{split}
\end{equation*}

\begin{figure}
\begin{center}
\includegraphics[height=6cm,width=8cm]{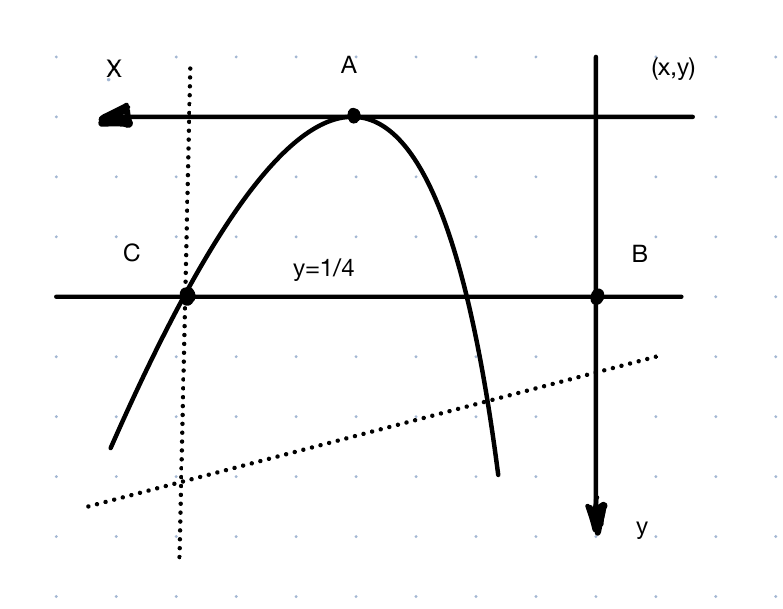}
\caption{The ''mirror" complex discriminant moduli space.}
\label{fig:2par}
\end{center}
\end{figure}

The birational transformation 
corresponding a rational curve $x=k$ 
($k$ a very large constant)
corresponds to a transition from $X_2$ to $X_3$ which
is the stacky quotient $[\CC^3 / \ZZ_4].$ 
This curve intersects $E_A=0$ in the neighborhood 
of the point $C.$ 
One of the 
associated spherical functors is 
is induced by the diagram 
\begin{equation*}
\begin{split}
\xymatrix{
E=[{\mathbb A}^1/\ZZ_2] \times {\mathbb P}^1 \; \ar@{^{(}->}[r] \ar[d]_q & X_4=[\CC^3 / \ZZ_4]\\
Z=[{\mathbb A}^1 / \ZZ_2]&
}
\end{split}
\end{equation*}
It is interesting to note that 
the circuit $I$ associated to this transition and determined by
corresponding edge of the secondary polytope is also 
$v_1 -2v_2 + v_3=0.$ 

\medskip

Recall that in determining multiplicities, the circuit discriminant
$\Delta_I= 4a_1 a_3 - a_2^2$ played a crucial role. For the transition
$X_1 \leftrightarrow X_4,$ we can consider the linear form $\psi$ 
in the cone $\Ca(\Fa_{14})$ whose
supporting face is the edge $F_{14}$ given by 
$$
\psi(v_0)= u, \psi(v_1)=  \psi(v_2)=  \psi(v_3)= 1, \psi(v_4)=0,
$$
with $u > 1/2$. A similar linear form $\psi$ will work for the transition
$X_2 \leftrightarrow X_3,$ with $u < 1/2$ in that case. The monomials in $f_Q$
$a_0^4, a_0^2 a_2 a_4, a_2^2 a_4^2, a_1 a_3 a_4^2$ have the weights 
$4u, 2u+1, 2, 2,$ respectively. The leading term for the edge $F_{14}$
is $a_0^4.$ The highest weight terms for the edge $F_{23}$ are 
$a_2^2 a_4^2$ and $a_1 a_3 a_4^2,$ so in this case the leading term 
for $f_Q$ is 
$$
16a_2^2 a_4^2 -64 a_1 a_3 a_4^2= - 16 a_4^2 \cdot \Delta_I.$$
We conclude that $n_{Q,F_{1,4}} =0$ and $n_{Q,F_{2,3}}=1,$ as 
Figure \ref{fig:2par} indicates. In this simple example, we do know
the true parametrizations of the components, so the weight calculation is not really needed.
However, in the general case, only the birational Horn parametrizations 
are available, so the weight calculation becomes necessary.

\medskip

\section{Final comments and some speculations}

The proposed constructions of spherical functors appearing 
in this paper seem to be related of the notion 
of co-sheaf over the topological 
space of the cone $K = \sum \RR v_i,$ in the 
sense of Bressler-Lunts \cite{BL},
section 6.7.1.

\

Since the defined spherical 
functors can be thought as 
categorical maps corresponding 
to the Bressler-Lunts strata, it is 
an intriguing question whether
the whole structure can be expressed
in the language of schobers introduced 
by Kapranov and Schechtman \cite{KaSc}.
This abstract realization would be consistent 
with the fact that the proposed $EZ$--spherical functors 
map ``stalks'' between various strata of 
a Whitney stratification of the characteristic
cycle of the GKZ hypergeometric system. 
Moreover, we expect that there are
analogues of the wall monodromy functors 
corresponding to all the toric strata 
in the stacky moduli space compactification 
given by the secondary fan, together with 
the corresponding semi-orthogonal 
decompositions with the categories $D_\Gamma$ 
as components.
Such a general picture would require a refinement
of our Minkovski sum argument in the proof of our multiplicity result. 
The stalks
in this case would be bounded derived categories 
of coherent sheaves of the form $D_\Gamma,$ 
indexed by the faces $\Gamma.$ In particular, 
if $\Gamma= \emptyset,$ the ``generic stalk'' is 
the bounded derived category of coherent sheaves on the 
ambient toric DM stack. 

\

Certain wall crossing phenomena were studied in the language of spherical pairs and 
schobers by Donovan \cite{Do} and, from a mirror symmetric 
point of view, by Nadler \cite{N}.
In a combinatorial context similar to ours,
\v{S}penko and Van den Bergh \cite{SvB}
produced a categorification of the GKZ system in the quasi-symmetric toric case. 
More work is needed in order to relate these points of view and make our speculation  
rigorous.

\bigskip

\end{document}